\documentclass[11pt]{article}
\usepackage[T1]{fontenc}
\usepackage{lmodern,amsmath,amssymb,amsthm,mathtools}
\usepackage[backend=biber,style=numeric,sorting=nyt,sortcites=true]{biblatex}
\usepackage[margin=1in]{geometry}
\usepackage{microtype}
\usepackage[colorlinks=true,linkcolor=blue,urlcolor=blue,citecolor=blue]{hyperref}
\usepackage{enumitem}
\hypersetup{pdftitle={Single Lie Brackets on Closed Manifolds},
 pdfauthor={Derek Zeng},pdfsubject={Smooth vector fields and their Lie brackets}}

\newcommand{\Vect}{\mathfrak{X}}
\setlist{itemsep=3pt,topsep=5pt}
\newtheorem{theorem}{Theorem}[section]
\newtheorem{lemma}[theorem]{Lemma}
\newtheorem{proposition}[theorem]{Proposition}
\newtheorem{corollary}[theorem]{Corollary}
\theoremstyle{remark}
\newtheorem{remark}[theorem]{Remark}
\title{Single Lie Brackets on Closed Manifolds}
\author{Zeyu Zeng\thanks{derek6@illinois.edu} \\[3pt]
\small University of Illinois Urbana-Champaign}
\date{September 5, 2026}
\begin{document}
\maketitle
\begin{abstract}
We prove that every smooth vector field on a closed smooth manifold is a single Lie bracket. The proof combines a relative transport construction on the complement of a ball with an exact extension across transverse graph disks. For the local extension, we first arrange that an auxiliary bracket is transverse to the disks, straighten it by its flow, and then remove the resulting scalar discrepancy. The argument applies without an orientability assumption. It also implies that every smooth fiberwise linear function on the cotangent bundle is a Poisson bracket of two such functions.
\end{abstract}

\section{Introduction and main result}
\label{sec:introduction}

Let $M$ be a smooth manifold, and let $\mathfrak{X}(M)=\Gamma(TM)$ denote the real Lie algebra of smooth vector fields. We use the convention
$$
    [Y,Z](h)=Y(Zh)-Z(Yh),
    \qquad h\in C^\infty(M).
$$
The Lie algebra $\mathfrak{X}_c(M)$ of compactly supported smooth vector fields is perfect; see Janssens \cite[Corollary~1]{Janssens2016}. Thus every smooth vector field on a closed manifold is a finite sum of Lie brackets. We ask whether one bracket suffices.

\begin{theorem}
\label{thm:main}
Let $M$ be a closed smooth manifold. For every
$X\in\mathfrak{X}(M)$, there exist
$Y,Z\in\mathfrak{X}(M)$ such that
$$
    [Y,Z]=X.
$$
\end{theorem}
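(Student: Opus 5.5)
The plan follows the abstract: split $M$ into the complement of a small closed ball and the ball itself, and treat the two pieces differently. Fix $X$ and choose a small embedded closed ball $B\subset M$.

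\emph{Step 1 (relative transport on $M\setminus B$).} The complement $N=M\setminus \operatorname{int}B$ is a compact manifold with boundary, and every point of $N$ can be flowed out through $\partial B$. Choose a vector field $Y$ on $M$ that is nonvanishing on $N$ and whose orbits all leave $N$ transversally through $\partial B$ in finite time. One can build this from a Morse function with a single minimum in $B$ by taking $Y$ to be a gradient-like field, after pushing all other critical points into $B$ by an isotopy; this isotopy step is possible because $M$ is connected. On $N$ we then solve the transport equation $[Y,Z]=X$, i.e.\ $\mathcal{L}_Y Z=X$, by integrating along the flow $\phi_t$ of $Y$:
\[
Z=\int_0^{\tau}(\phi_{t})^*X\,dt+\text{(initial data on }\partial B),
\]
with $\tau$ the exit time. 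Equivalently, we prescribe $Z$ freely on a transverse hypersurface near $\partial B$ and integrate the ODE $\mathcal{L}_YZ=X$ along orbits. This gives $[Y,Z]=X$ exactly on $N$ and a neighborhood of it. Orientability is never used, since everything is done along orbits. The construction leaves freedom in the data on a collar of $\partial B$, and we use that freedom in the next step.

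\emph{Step 2 (extension across $B$).} Inside $B\cong D^n$ we must extend $Y,Z$ from a collar so that $[Y,Z]=X$ holds throughout. Foliate $B$ by transverse graph disks $D_s$ (for example, graphs over $D^{n-1}$ parametrized by $s\in[-1,1]$) whose boundaries lie in the collar, where $Y,Z$ are already defined. First perturb the auxiliary fields in the collar so that the bracket $W=[Y,Z']$, for a suitable auxiliary $Z'$, is transverse to every $D_s$. Then straighten $W$ by its flow, giving coordinates $(s,x)$ with $W=\partial_s$. In these coordinates the equation becomes a transport problem in $s$ with initial data on $D_{-1}$, which we solve as in Step 1. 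The solution automatically matches the collar data along the lateral boundary, because the flow preserves the collar region where the equation already holds.

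\emph{Main obstacle.} The hard part is the final matching at the far side $D_{1}$. After transport, $[Y,Z]-X$ vanishes off a compact set inside $B$ only up to a discrepancy, and by the transversality arrangement this discrepancy can be made a scalar multiple $f\,W$ with $f$ compactly supported. To remove it I would first try adding a correction $Z\mapsto Z+gV$ with $[Y,V]$ proportional to $W$, which turns the problem into solving a scalar equation $Y(g)=f$ along orbits that exit $B$. Solving such an equation needs a compatibility condition (the integral of $f$ along each orbit must vanish), and I would meet it by using the remaining freedom in the initial data on $D_{-1}$ to adjust these integrals, since the integral depends affinely on that data. Once this is done, $Y,Z$ are smooth on all of $M$ and $[Y,Z]=X$, which proves Theorem~\ref{thm:main}.
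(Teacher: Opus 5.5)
Your outline follows the architecture in the abstract, but the step you call the main obstacle is not resolved, and the mechanism you propose does not resolve it.

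\textbf{The scalar discrepancy.} Once the auxiliary bracket is made transverse and straightened by its flow, no transport problem remains. Pulling the auxiliary pair back by the straightening map and pushing it to physical coordinates makes the bracket exactly $X$. The whole defect then sits in the exit germ, which is $(X,(t+c(u))X)$ instead of $(X,tX)$; see \eqref{eq:defect}. Your orbit-integral compatibility condition amounts to $c\equiv0$: interpolating to $(X,(t+\chi(t)c)X)$ creates the defect $\chi'c\,X$, whose integral along each $t$-line is $\pm c$.

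Corrections along the bracket direction cannot change $c$. Indeed $[T,bT]=(\partial_tb)\,T$, so bracket $T$ forces $b=t+c(u)$ with $c$ frozen. By your own account, the correction $Z\mapsto Z+gV$ is available only once the orbit integrals already vanish, so it cannot be what removes $c$.

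Freedom in the data on $D_{-1}$ does not help either. Those data come from the exterior solution, so changing them also changes the exterior transport and hence the data on the far cap. Nothing shows that the resulting variation can cancel an arbitrary function. On the circle with $Y=\partial_t$, the obstruction $\int h$ does not depend on the initial data at all.

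The missing idea is the paper's scalar connector. A loop of transverse diffeomorphisms $F_{A,C}$, parametrized by a rectangle traversed many times and equal to the identity at both ends, yields an exact pair with bracket $T$. Its exit germ differs from its entrance germ only by an arbitrary compactly supported $k(u)\partial_u$ added to the second field; the shift comes from the closed-loop integral of $F_t/F_u$ (Lemma~\ref{lem:kick}). Conjugating such a shift by the shear $(s,u,v)\mapsto(s+g(u,v),u,v)$, which preserves $T$, and then undoing it by an ordinary shift changes only the offset, by $k\,\partial_ug$. Taking $\partial_ug=\varepsilon$ near $\operatorname{supp}\Delta$ and $k=\Delta/\varepsilon$ gives exactly $\Delta$ (Lemma~\ref{lem:scalar}). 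This is where a transverse direction is essential.

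\textbf{Other steps that need the paper's setup.}

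\emph{Step 1.} The orbits cannot all leave $N$ transversally when $n\ge2$. Since $Y(f)$ has a fixed sign on the compact set $N$, $Y$ can point neither everywhere out of nor everywhere into $N$ along $\partial B$; otherwise one of the two flows would keep orbits in $N$ forever while $f$ changes at a rate bounded below. Because $\partial B$ is connected, $Y$ is tangent to it somewhere, orbits graze $\partial B$ there, and the exit time $\tau$ is not smooth. The paper avoids exit times entirely by the finite telescoping sum of Lemma~\ref{lem:finite}, which only needs $S(f)=1$ near the exterior set.

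\emph{Lateral matching in Step 2.} This is not automatic for arbitrary exterior data. The paper puts the ball in a flow box where $X=\partial_t\neq0$; a transverse bracket can be carried onto $X$ by a diffeomorphism only where $X\neq0$. It takes a Morse function equal to $t$ near the equator $A$, and uses relative transport (Lemma~\ref{lem:relative}) to make the exterior pair exactly $(X,tX)$ near $A$. Only then are the cap rims reference, the defect $c$ compactly supported in the open disk, and the middle of the ball fillable by $(X,tX)$.

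\emph{Positivity.} Making the normal component of the bracket positive while keeping the collar germs is itself a genuine lemma (Lemma~\ref{lem:positive}), not a routine perturbation.

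\emph{Low dimensions.} Dimensions $0$ and $1$ are not covered. On the circle there is no transverse direction, and when $\int h\neq0$ the paper instead uses the pair $(r\cos\theta\,\partial_t,\,r\sin\theta\,\partial_t)$.
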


Here, closed means compact without boundary. No orientability assumption is required. For a Lie algebra $\mathfrak{g}$, its \emph{bracket width} is the least nonnegative integer $k$, if such an integer exists, for which every element of $[\mathfrak{g},\mathfrak{g}]$ is a sum of at most $k$ brackets. The theorem says that $\mathfrak{X}(M)$ has bracket width one when $M$ is nonempty and has positive dimension.

The question whether every smooth vector field is a single Lie bracket was posed by Greilhuber on Mathematics Stack Exchange \cite{Greilhuber2021}. This is an online discussion, not a peer-reviewed publication; we cite it as a source of the question and of the circle case recalled below. Theorem~\ref{thm:main} answers the question for closed manifolds.

The distinction between a sum of brackets and a single bracket also occurs in the algebraic category. Dubouloz, Kunyavskii, and Regeta \cite{DuboulozKunyavskiiRegeta2021} give examples of smooth affine varieties for which the Lie algebra of algebraic vector fields has bracket width greater than one. Our argument takes place in the real smooth category and uses smooth cutoff functions.

There is a related, but different, theory of smooth perfectness for diffeomorphism groups \cite{HallerRybickiTeichmann2013,HallerTeichmann2003}. Those results concern local factorizations of diffeomorphisms into products of group commutators, with smooth dependence on the diffeomorphism. Theorem~\ref{thm:main} concerns the surjectivity of the Lie bracket map. Our construction does not provide a continuous choice of $(Y, Z)$ as $X$ varies, or a factorization theorem for diffeomorphisms.

The difficulty is to make local solutions agree globally. In a coordinate box, writing the target field in coordinates and integrating its coefficients in one coordinate solves an equation of the form $[\partial_t,Z]=X$. Multiplying a solution by a cutoff, however, changes its bracket:
$$
    [\chi Y,\chi Z]
    =
    \chi^2[Y,Z]
    +\chi Y(\chi)Z
    -\chi Z(\chi)Y.
$$
We therefore use relative constructions that retain the prescribed pair on open neighborhoods of the sets along which solutions are joined.

The proof has two parts. First, a finite transport construction solves the bracket equation near a compact set on which an auxiliary vector field $S$ admits a function $f$ with $S(f)=1$. Its relative version retains prescribed data near a compact subset of $f^{-1}(0)$. A Morse preparation then gives a solution outside a small ball, equal to the pair $(\partial_t,t\partial_t)$ near an equator in a flow box of $X$.

Second, we extend this solution inward across two transverse graph disks. The local extension consists of three steps. We extend the pair while making the normal component of its bracket strictly positive in the chosen direction (Lemma~\ref{lem:positive}); use flow coordinates to make the bracket equal to the target field; and remove the scalar discrepancy in the outgoing pair by the constructions of Section~\ref{sec:scalar}. This gives the relative graph connector of Proposition~\ref{prop:graph}. It agrees with the prescribed pair at its entrance and with the reference pair at its exit and lateral boundary, so the two extensions can be joined to the reference solution in the remaining part of the ball.

Section~\ref{sec:transport} proves the transport statements. Section~\ref{sec:scalar} constructs the transverse and scalar corrections. Section~\ref{sec:graph} establishes the graph connector, and Sections~\ref{sec:equator}--\ref{sec:completion} complete the global proof. Section~\ref{sec:cotangent} records the consequence for fiberwise linear functions on the cotangent bundle. We use standard results on flows \cite[Chapter~9]{Lee2013}, Morse theory and tubular neighborhoods \cite{Hirsch1976}, and Sard's theorem \cite{Sard1942}.

All manifolds, functions, and vector fields are smooth. An \emph{exact pair} for a target $V$ on an open set is a pair $(Y, Z)$ satisfying $[Y, Z]=V$ there. Equality of \emph{full germs} along a compact set means equality on an open neighborhood of that set. A modification has \emph{compact transverse support} if its support throughout the finite cylinder is contained in one fixed compact set of transverse variables. Fields on a closed cylinder are understood to extend smoothly to an open neighborhood of it. We write $B_r$ for a Euclidean ball in the specified chart and $\operatorname{supp}$ for closed support.

A compact manifold has finitely many connected components, so it suffices to work on each component separately. In dimension zero the assertion is immediate. In dimension one each connected component is a circle. For completeness, the construction recorded in \cite{Greilhuber2021}, together with the comments there,
is as follows. Write
$$
    X=h(t)\partial_t
    \quad\text{on }\mathbb{R}/L\mathbb{Z},
    \qquad
    I=\int_0^L h(t)\,dt.
$$
If $I=0$, then
$$
    H(t)=\int_0^t h(s)\,ds
$$
is periodic and
$$
    [\partial_t,H\partial_t]=X.
$$
If $I\neq0$, set
$$
    r^2=\frac{|I|}{2\pi},
    \qquad
    \theta(t)=r^{-2}\int_0^t h(s)\,ds.
$$
Since
$$
    \theta(t+L)-\theta(t)
    =
    2\pi\operatorname{sgn}(I),
$$
the functions $r\cos\theta$ and $r\sin\theta$ are periodic,
and
$$
    [r\cos\theta\,\partial_t,\,
     r\sin\theta\,\partial_t]
    =
    r^2\theta'\partial_t
    =
    X.
$$
The remainder of the proof concerns $\dim M\geq2$.

\section{Finite transport with prescribed data on a transverse slice}\label{sec:transport}

Let $S$ be a complete vector field, with flow $\varphi_t$. For a vector field $W$, write
$$
 P_tW=(\varphi_t)_*W,\qquad
 A_\delta W=-\int_0^\delta P_tW\,dt.
$$
The flow differentiation formula (with the bracket convention fixed above; see~\cite[Chapter~9]{Lee2013}) gives
\begin{equation}
    \frac{d}{dt}P_tW=P_t[W,S]=[P_tW,S],
 \qquad [A_\delta W,S]=W-P_\delta W. \tag{2.1}\label{eq:transport}
\end{equation}
All integrals of fields here are coefficientwise integrals in a local trivialization; their coordinate independence follows from linearity.

\begin{lemma}[Finite transport]\label{lem:finite}
Suppose $M$ is compact, $K\subset O\subset M$, $K$ is compact, $O$ is open, and $S(f)=1$ on $O$ for a global smooth function $f$ and a global vector field $S$. For every global smooth vector field $E$ there is a global vector field $D$ with
$[D,S]=E$ on a neighborhood of $K$.

Moreover, if $\operatorname{supp}E\subset\{f\ge e\}$ for $e>0$, $D$ may be chosen to vanish on a neighborhood of $K\cap\{|f|<e/2\}$. The analogous assertion holds for support in $\{f\le-e\}$.
\end{lemma}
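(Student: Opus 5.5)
The plan is to build $D$ out of the transport operators $A_\delta$ of \eqref{eq:transport}. The idea is to cut $E$ into pieces localized in thin slabs of $f$, and to transport each piece far enough in the $S$-direction that the error term $P_\delta W$ no longer meets a neighborhood of $K$. The key observation is that on $O$ we have $\frac{d}{dt}f(\varphi_t x)=1$ as long as the trajectory stays in $O$, so
$$
(P_tW)(x)=d\varphi_t\,W(\varphi_{-t}x)
$$
only involves values of $W$ at points where $f$ is smaller by exactly $t$.

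\textbf{Step 1 (uniform flow time).} Choose a compact neighborhood $K'\subset O$ of $K$ and, by compactness and continuity of the flow, a $\delta>0$ with $\varphi_s(K')\subset O$ for all $|s|\le 2\delta$. Then for $x\in K'$ and $|s|\le2\delta$ we have $f(\varphi_{s}x)=f(x)+s$.

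\textbf{Step 2 (slab induction).} Let $[m_0,m_1]\supset f(K')$ and set $c_k=m_0+k\delta/2$. I would prove by induction on $k$ that there is $D_k$ with $[D_k,S]=E$ on a neighborhood of $K\cap\{f\le c_k\}$; for $k$ large this gives the lemma. Given $D_{k-1}$, put $R=E-[D_{k-1},S]$, which vanishes near $K\cap\{f\le c_{k-1}\}$. Pick a cutoff $\rho$ equal to $1$ near $K$ and supported in the interior of $K'$, and a function $\chi(f)$ equal to $1$ on $\{f\ge c_{k-1}-\delta/8\}$ and $0$ on $\{f\le c_{k-1}-\delta/4\}$. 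Arrange, by shrinking the neighborhood on which $R$ vanishes, that $W:=\rho\,\chi(f)R$ agrees with $R$ near $K\cap\{f\le c_k\}$. Then set $D_k=D_{k-1}+A_{2\delta}W$, so that by \eqref{eq:transport}
$$
[D_k,S]=E-R+W-P_{2\delta}W .
$$
Near $K\cap\{f\le c_k\}$ the term $W-R$ vanishes. At such $x$ we have $f(\varphi_{-2\delta}x)=f(x)-2\delta\le c_{k-1}-3\delta/2$ by Step 1, where $W=0$. Hence $P_{2\delta}W$ vanishes there too, and $D_k$ has the required property.

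\textbf{Step 3 (the support statement).} If $\operatorname{supp}E\subset\{f\ge e\}$, I would instead use the single formula $D=A_{T}(\rho E)$ together with the same slab bookkeeping. Since $D(x)=-\int_0^T d\varphi_t\,E(\varphi_{-t}x)\,dt$ only samples $E$ at backward points, for $x$ near $K\cap\{|f|<e/2\}$ the backward trajectory within $O$ has $f<e/2$, so every integrand vanishes. The same is true for every correction $A_{2\delta}W$ in Step 2, since each $W$ is supported where $E$ or an earlier correction is nonzero. For support in $\{f\le -e\}$, I would replace $S$ by $-S$ and $f$ by $-f$, or equivalently integrate over negative times using $[-\int_{-\delta}^0P_tW\,dt,S]=P_{-\delta}W-W$ up to sign.

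\textbf{Main obstacle.} The main obstacle is the bookkeeping in Step 2. Backward trajectories starting near $K$ may leave $O$ and later re-enter it, so the identity $f(\varphi_{-t}x)=f(x)-t$ is only available for $|t|\le2\delta$. For this reason each correction uses only the short time $2\delta$ and a localized piece $W$. One must also check that the cutoffs $\rho$ and $\chi$ do not reintroduce nonzero residual near the part of $K$ already treated. The first thing I would try here is to require that each $R$ vanish on a fixed neighborhood $U_{k-1}$ of $K\cap\{f\le c_{k-1}\}$, and to shrink these neighborhoods only finitely many times.
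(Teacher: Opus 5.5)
Your Steps 1 and 2 are correct and are essentially the paper's proof. Only short transports $A_{2\delta}$ of cut-off pieces are used, so each step pushes the telescoping error $P_{2\delta}W$ to larger values of $f$, and finitely many steps exhaust $f(K')$. The paper writes this as the explicit recursion $W_{j+1}=\chi P_\delta W_j$ with $\operatorname{supp}W_j\subset O\cap\{f\ge a_0+j\delta\}$. You track the residual $R=E-[D_{k-1},S]$ instead, which is why you need the extra cutoff $\chi(f)$.

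Your worry about the cutoffs is unnecessary, and no shrinking is needed. Points of $K$ with $f\le c_{k-1}-\delta/8$ lie in the open set where $R=0$. Elsewhere near $K$ one has $\rho=\chi(f)=1$. Hence $W=R$ on a neighborhood of all of $K$. To start the induction, take $m_0<\min_{K'}f$ and $D_0=0$.

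The real problem is Step 3. The single primitive $D=A_T(\rho E)$ with long $T$ fails, both for the bracket equation and for the vanishing claim. Once $T>2\delta$, the backward orbit of $x$ may leave $O$ and re-enter $\operatorname{supp}\rho$ at points where $f\ge e$. This is exactly the obstruction you yourself describe under \emph{Main obstacle}.

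For a concrete failure of the equation, take $M=\mathbb R/2\pi\mathbb Z$, $S=\partial_\theta$, $O$ an arc of length $2\pi-\epsilon$, $K\subset O$ an arc of length $2\pi-3\epsilon$, and $E=\partial_\theta$. Every rotation of $\{\rho=1\}$ meets $K$ in positive length. Hence for every $T$, $[A_T(\rho E),S]=E-P_T(\rho E)\ne E$ somewhere on $K$.

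So discard this formula and prove the support statement for the Step 2 primitive $D=\sum_k A_{2\delta}W_k$. The fact to verify is $\operatorname{supp}W_k\subset\{f\ge e\}$ for all $k$, by induction:
\begin{itemize}
\item $P_{2\delta}W_j$ is supported in $\varphi_{2\delta}(\operatorname{supp}W_j)$, and $\operatorname{supp}W_j\subset K'$. By Step 1 these points have $f\ge e+2\delta$.
\item Hence $R_k=E-\sum_{j<k}(W_j-P_{2\delta}W_j)$ and $W_k=\rho\chi(f)R_k$ are supported in $\{f\ge e\}$.
\item Now let $x\in\operatorname{int}K'$ with $|f(x)|<e/2$ and $0\le t\le 2\delta$. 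Step 1 gives $f(\varphi_{-t}x)<e/2$, so each $A_{2\delta}W_k(x)=0$.
\end{itemize}
Thus $D$ vanishes on the open set $\operatorname{int}K'\cap\{|f|<e/2\}$, which contains $K\cap\{|f|<e/2\}$. This is precisely the paper's argument: it chooses $W_0$ supported in $\{f\ge e\}$ and observes that every $W_j$ inherits this. For support in $\{f\le -e\}$, apply the result to $(-S,-f)$ and negate the primitive.
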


\begin{proof}
Choose $U$ with $K\subset U\Subset O$ and $\delta>0$ so small that $\varphi_{[-\delta,0]}(\overline U)\subset O$. Choose a smooth cutoff $\chi$ supported in $U$, equal to one on a neighborhood $U_*$ of $K$. Choose $W_0$ supported in $O$, equal to $E$ on $U$, by multiplying $E$ by another cutoff. If $W_0=0$, the result is immediate. Otherwise let $a_0=\min_{\operatorname{supp}W_0}f$ and recursively put
$$
 W_{j+1}=\chi P_\delta W_j.
$$
Inductively,
$$
 \operatorname{supp}W_j\subset O\cap\{f\ge a_0+j\delta\}.
$$
Indeed, if $y\in U$ and $P_\delta W_j(y)\ne0$, then $x=\varphi_{-\delta}(y)\in\operatorname{supp}W_j$, and the intervening short orbit stays in $O$. Hence $f(y)=f(x)+\delta\ge a_0+(j+1)\delta$. Taking closures proves the support assertion for $W_{j+1}$. Near $K$ we also have $W_{j+1}=P_\delta W_j$.

Choose an integer $q$ with $a_0+q\delta>\sup_U f$. The same argument gives $P_\delta W_{q-1}=0$ on $U$. Therefore
$$
 D=\sum_{j=0}^{q-1}A_\delta W_j
 \quad\Longrightarrow\quad
 [D,S]=\sum_{j=0}^{q-1}(W_j-P_\delta W_j)=E
$$
near $K$. This is a finite telescoping sum; it uses no long-time escape
assertion.

If $E$ is supported in $f\ge e$, choose $W_0$ with that same support condition. Every $W_j$ is then supported in $f\ge e$. For $p\in U$ with $|f(p)|<e/2$ and $0\le t\le\delta$, its short backward orbit stays in $O$ and
$$
 f(\varphi_{-t}(p))=f(p)-t<e.
$$
Thus $P_tW_j(p)=0$ for every $j$, and $D=0$ there. This proves the stated neighborhood conclusion. Apply this argument to $(-S,-f)$ and negate its primitive to obtain the negative-support version.
\end{proof}

\begin{lemma}[Relative transport]\label{lem:relative}
Under the hypotheses of Lemma~\ref{lem:finite}, let $A\subset K\cap f^{-1}(0)$ be compact. If $E$ vanishes on a neighborhood of $A$, there is a global $C$ such that
$$
 [C,S]=E\quad\text{near }K,\qquad C=0\quad\text{near }A.
$$
\end{lemma}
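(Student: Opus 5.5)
Since $E$ vanishes on a neighborhood of $A$ and $A\subset f^{-1}(0)$, the plan is to split $E$ into three pieces according to the sign of $f$ near $A$. We handle each piece with Lemma~\ref{lem:finite} or its support version, and then absorb the one piece that cannot be controlled near $A$ by a correction that commutes with $S$.

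\emph{Neighborhoods.} Choose an open $N\supset A$ on which $E=0$. Shrinking if needed, choose $e>0$ and an open flow-tube neighborhood
$$
N'=\{\varphi_t(a): a\in N_0,\ |t|<e\}
$$
of $A$ with $N'\subset N\cap O$. Here $N_0\subset f^{-1}(0)$ is a relatively open neighborhood of $A$ in the level set. This is possible because $S(f)=1$ near $A$, so $f^{-1}(0)$ is a hypersurface transverse to $S$ there, and $(a,t)\mapsto\varphi_t(a)$ is a diffeomorphism onto $N'$ for $N_0$ and $e$ small. Replacing $e$ by a smaller value, we may also assume that $\{|f|<e\}\cap K$ near $A$ lies where $f$ agrees with the flow time.

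\emph{Splitting $E$.} Take a partition of unity $1=\rho_++\rho_-+\rho_0$ in the variable $f$, with
\begin{itemize}
\item $\rho_+$ supported in $\{f\ge e/4\}$,
\item $\rho_-$ supported in $\{f\le -e/4\}$,
\item $\rho_0$ supported in $\{|f|<e/2\}$.
\end{itemize}
Set $E_\pm=\rho_\pm E$ and $E_0=\rho_0E$. By the support version of Lemma~\ref{lem:finite}, applied with $e/4$ in place of $e$, we get $D_\pm$ with $[D_\pm,S]=E_\pm$ near $K$, and $D_\pm=0$ near $K\cap\{|f|<e/8\}$. In particular $D_\pm=0$ near $A$.

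\emph{The middle piece.} For $E_0$, take any $D_0$ from Lemma~\ref{lem:finite}. On $N'$ we have $E_0=0$, so $[D_0,S]=0$ there. Hence $D_0$ is $S$-invariant on the tube $N'$: in flow coordinates $(a,t)$ its coefficients are independent of $t$. Let $G$ be the $S$-invariant field on $N'$ obtained by transporting $D_0|_{N_0}$ along the flow, so $G=D_0$ on $N'$. Multiply $G$ by a cutoff $\psi(a)$ that depends only on the transverse variable $a$, is compactly supported in $N_0$, and equals $1$ near $A$. Multiply also by $\sigma(t)$, equal to $1$ for $|t|\le e/2$ and supported in $|t|<e$.

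The field $\psi\sigma G$ satisfies
$$
[\psi\sigma G,S]=-\psi\sigma'G,
$$
which is supported in $\{e/2\le|f|<e\}$ and vanishes for $|f|<e/2$.

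Define
$$
C=D_++D_-+D_0-\psi\sigma G+R,
$$
where $R$ must satisfy $[R,S]=-\psi\sigma'G$ near $K$ and $R=0$ near $A$. Now $\psi\sigma'G$ splits into a piece supported in $\{f\ge e/2\}$ and a piece supported in $\{f\le-e/2\}$. Applying the support version of Lemma~\ref{lem:finite} to each piece gives such an $R$, vanishing near $K\cap\{|f|<e/4\}\supset A$.

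\emph{Verification.} Near $A$ we have $D_0=G$ and $\psi\sigma=1$, so $D_0-\psi\sigma G=0$ there, and every other summand of $C$ also vanishes near $A$. Near $K$, the brackets with $S$ add up to
$$
E_++E_-+E_0+\psi\sigma'G-\psi\sigma'G=E.
$$

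\emph{Main obstacle.} The delicate step is making the flow-tube construction legitimate and global. We need $\psi\sigma G$ to extend by zero to a smooth global field, which requires the tube map to be an embedding with compact closure inside $N\cap O$. We also need the supports of $\psi\sigma'G$ to satisfy the hypotheses of Lemma~\ref{lem:finite} with respect to the global $f$. I would handle this by choosing $N_0$ relatively compact, so that $e$ can be taken small by compactness of $A$, and by using that $f=t$ on the tube.
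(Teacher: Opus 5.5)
Your argument is correct, but it takes a different route from the paper's.

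\textbf{How the paper does it.} The paper builds no flow tube. It defines the global map $\Psi(p)=\varphi_{-f(p)}(p)$ and the cutoff $\eta=\eta_0\circ\Psi$. The identity $d\Psi(S)=(1-S(f))S$ gives $S(\eta)=0$ on $O$, and $\eta=1$ near $A$. It then takes $C=(1-\eta)Q+C_++C_-$, where $Q$ is any primitive of $E$. Because $\eta$ is $S$-invariant, the only bracket error is $\eta E$. This vanishes on the whole strip $|f|<e$, since $E=0$ near $A$, so the support-controlled primitives absorb it.

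\textbf{How yours differs.} You reverse the roles. Your transverse cutoff $\psi(a)$ is the local analogue of $\eta$, but you make the \emph{field} invariant instead: you replace $D_0$ by its exact flow transport $G$. You then cut off in flow time as well. The error $\psi\sigma'G$ therefore comes from $\sigma'$ and sits at levels $e/2\le|f|<e$. Both arguments finish with the support version of Lemma~\ref{lem:finite}.

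\textbf{What each buys.} The paper's route is shorter. It needs no embedded tube, since $\Psi$ is global by completeness of $S$, and invariance is a one-line computation. Yours keeps every modification inside a small tube around $A$ and makes the error term explicit. The cost is the tube embedding (injectivity via $f=t$ along short segments in $O$) and some care about where $D_0$ is invariant.

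\textbf{Correction.} The claim that $[D_0,S]=0$, hence $G=D_0$, on all of $N'$ is not justified. Lemma~\ref{lem:finite} gives $[D_0,S]=E_0$ only on some neighborhood $U_*$ of $K$. That neighborhood need not contain the tube, for instance when $K=A$, and $e$ was fixed before $D_0$ was chosen. What is true, and all you use, is the following. By the tube lemma, $N'\cap U_*$ contains a product $W_0\times(-e',e')$ around $A$. On it $D_0$ has $t$-independent coefficients, so $D_0=G$ there. The identity $[\psi\sigma G,S]=-\psi\sigma'G$ needs only $[G,S]=0$, which holds by construction of $G$.

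\textbf{Simplification.} The three-way splitting of $E$ is unnecessary. The only property of $E_0$ you use is that it vanishes on $N'$, and $E$ itself does. So you can take $D_0$ to be a primitive of $E$ and drop $D_\pm$.
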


\begin{proof}
Let $U_0$ be an open neighborhood of $A$ where $E=0$, and choose $\eta_0\in C^\infty(M,[0,1])$ supported compactly in $U_0$, equal to one near $A$. Define the global smooth map and function
$$
 \Psi(p)=\varphi_{-f(p)}(p),\qquad \eta=\eta_0\circ\Psi.
$$
Since the flow preserves $S$, differentiation gives
$$
 d\Psi_p(S_p)=(1-S(f)(p))S_{\Psi(p)}.
$$
Consequently $S(\eta)=0$ on $O$. This calculation requires $S(f)=1$ only at $p$, not along the entire trajectory defining $\Psi$. Also $\Psi|_A$ is the identity, so $\eta=1$ on a neighborhood of $A$.

Choose $e>0$ so small that $\varphi_{[-e,e]}(\operatorname{supp}\eta_0)\subset U_0$. If $\eta(p)\ne0$ and $|f(p)|<e$, then
$$
 p=\varphi_{f(p)}(\Psi(p))\in U_0.
$$
It follows that $\eta E=0$ throughout the global strip $|f|<e$.
We may therefore split smoothly
$$
 \eta E=E_++E_-,\qquad
 \operatorname{supp}E_+\subset\{f\ge e\},\quad
 \operatorname{supp}E_-\subset\{f\le-e\},
$$
by taking its positive and negative $f$-parts.

Choose $Q$ with $[Q,S]=E$ near $K$ by Lemma~\ref{lem:finite}. Since
$S\eta=0$ on $O$,
$$
 [(1-\eta)Q,S]=(1-\eta)E\quad\text{near }K.
$$
Choose primitives $C_+,C_-$ for $E_+,E_-$ by the support-controlled parts of that lemma. They vanish near $A$, because $f=0$ there. Then
$$
 C=(1-\eta)Q+C_++C_-
$$
has both claimed properties.
\end{proof}

\section{Exact transverse shifts and scalar connectors}\label{sec:scalar}

In this section $T=\partial_t$. For a function $c$ of transverse variables,
write
$$
 R_c=(T,(t+c)T).
$$
More generally, for a transverse field $V$ independent of $t$,
the pair $(T,(t+c)T+V)$ has bracket $T$.

\begin{lemma}[Transverse shift]\label{lem:kick}
Let $I$ be a bounded interval and $K\in C_c^\infty(I)$. In any prescribed positive time interval one can construct an exact pair with bracket $T$ whose entrance and exit full germs are
$$
 (T,(t+c(u))T+R(u)\partial_u),\qquad
 (T,(t+c(u))T+(R(u)+K(u))\partial_u),
$$
respectively. The modification has compact transverse support. The same assertion holds with additional transverse variables as smooth parameters, provided $K$ has compact support in the full transverse coordinate box.
\end{lemma}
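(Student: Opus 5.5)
The plan is to treat the statement as a problem about two transverse ODE/PDE coefficients, after two reductions that normalize the shift $c$ and the field $R$.

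\textbf{Normalizing $c$.} I first remove $c$. The diffeomorphism $\Phi(t,u)=(t-c(u),u)$ preserves $T$ and carries $(t+c)T$ to $tT$. It carries $R\partial_u$ to $R(\partial_u+c'T)$, and this $T$-component can be absorbed: the pair $(T,\,(t+c)T+R\partial_u)$ has the same bracket $T$ as $(T,\,tT+R\partial_u+g(u)T)$ for any $t$-independent $g$. So after conjugation both germs have the form
$$
(T,\ (t+\tilde c(u))T+W\partial_u),\qquad W\in\{R,\ R+K\}.
$$
Since $\Phi$ commutes with the $t$-flow, entrance and exit germs are preserved and supports stay in the same compact transverse set. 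The real task is therefore to change the $\partial_u$-coefficient of $Z$ by $K$ over a time interval $[t_0,t_1]$ while keeping $[Y,Z]=T$.

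\textbf{Why $Y$ must move.} Keeping $Y=T$ fails. Any $Z=(t+c)T+V(t,u)\partial_u$ with $[T,Z]=T$ forces $\partial_tV=0$. Hence $Y$ must leave $T$ in the middle of the interval. I will use the ansatz
$$
Y=(1+a)T+\alpha\,\partial_u,\qquad Z=(t+c+b)T+\beta\,\partial_u,
$$
with $a,\alpha,b$ compactly supported in $(t_0,t_1)\times\operatorname{int}(\operatorname{supp}$-box$)$ and $\beta=R+\rho(t)K+\gamma$, where $\rho$ is a step from $0$ to $1$. Writing $[Y,Z]=T$ gives two equations, one from the $T$-component and one from the $\partial_u$-component. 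The $\partial_u$-equation is linear in $\alpha$ given $(a,b,\gamma)$, namely
$$
(1+a)\partial_t\beta+\alpha\,\partial_u\beta-(t+c+b)\partial_t\alpha-\beta\,\partial_u\alpha=0 .
$$
The $T$-equation is linear in $b$ along the flow of $Y$.

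\textbf{Solving the system (main obstacle).} I expect this step to be the hard one: a primitive with compact support in $t$ requires the net transport to vanish, while the $\partial_u$-coefficient must genuinely change by $K$. My first attempt is to make $Z$ act as the transporting field and apply the finite transport argument of Lemma~\ref{lem:finite} with $S$ proportional to $Z$. Near the region where the change happens, $Z=(t+c)T+\dots$ has $Z(\log(t+c))=1$ when $t+c>0$. The prescribed time interval can be translated, using $t\mapsto t+s$, $Z\mapsto Z-sY$, which preserves the bracket, so that $t+c$ is bounded away from $0$. This gives a function $f$ with $S(f)=1$ on a neighborhood of the box. Lemma~\ref{lem:relative} then produces $Y$-corrections $D$ solving $[D,Z]=E$, where $E$ is the discrepancy $-\rho'K\partial_u$ written in the $Z$-adapted frame. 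Its relative version lets me keep $D=0$ on full neighborhoods of the entrance and exit slices, where $E$ vanishes. I then set $Y=T+D$ and $Z=(t+c)T+(R+\rho K)\partial_u$; the residual term $[D,\rho K\partial_u]$ should be handled by iterating this construction, or by first scaling $K$ small and splitting it into finitely many small kicks. Compactness of $\operatorname{supp}K$ gives the compact transverse support, and the same argument runs verbatim with extra transverse variables as parameters, because all constructions above are fiberwise in $u$ and smooth in parameters.
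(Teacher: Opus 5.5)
\textbf{There is a genuine gap.} The step that fails is your claim that Lemma~\ref{lem:relative} gives a correction $D$ vanishing near \emph{both} the entrance and the exit slice. With $Y=T+D$ and $Z=(t+c)T+(R+\rho K)\partial_u$, the equation $[Y,Z]=T$ is exactly $[Z,D]=\rho'K\partial_u$. Also, $D$ must vanish near both slices, because both prescribed germs have first field $T$. Once $t+c>0$ on the band, every $Z$-orbit crosses it in finite time. Integrating $\frac{d}{ds}\phi_s^*D=\phi_s^*[Z,D]$ along the flow $\phi_s$ of $Z$, from before the entrance to after the exit, gives the necessary condition
\[
\int_{\mathbb R}\phi_s^*\bigl(\rho'K\partial_u\bigr)\,ds=0 .
\]
This condition fails. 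Take $c=R=0$ and $0\le K\not\equiv0$. Then $t(s)=t_0e^s$, and the integrand equals $\rho'(t(s))K(u(s))\,(\partial_{u_0}u(s))^{-1}\partial_u$. Its coefficient is nonnegative, and it is positive on an $s$-interval whenever $K(u_0)>0$, because $\{K>0\}$ is invariant under $\dot u=\rho K(u)$. So no admissible $D$ exists.

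Lemma~\ref{lem:relative} cannot get around this. It only gives vanishing near a compact subset of one level set $f^{-1}(0)$, and the two slices lie on different levels of any $f$ with $Z(f)=1$ (they are separated by the positive transit time). Splitting $K$ into small kicks or iterating does not help either, since the obstruction is linear in $K$. Already the first step $[tT,D]=\rho'K\partial_u$ has obstruction $K\partial_u\int\rho'(\tau)\,\tau^{-1}d\tau\neq0$. Restoring your freedom $b,\gamma$ in $Z$ can remove this first-order obstruction. But then the nonlinear error terms must be cancelled exactly, and an unspecified iteration is neither a finite construction nor a convergence argument.

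\textbf{The missing idea} is an exact nonlinear mechanism in which $Y$ is moved through a closed loop of transverse diffeomorphisms. The paper chooses $F(t,\cdot)$ equal to the identity near both ends, sets $\Phi(t,u)=(t,F(t,u))$ and $Q=\int_{t_0}^tF_t/F_u\,ds$, and pushes forward $\partial_t$ and $(t+c)\partial_t+(R-Q)\partial_u$ by $\Phi$. The source bracket is $\partial_t-(F_t/F_u)\partial_u=(\Phi^{-1})_*\partial_t$. Hence the bracket is exactly $T$ for arbitrary $c$ and $R$, and the exit coefficient is $R-Q(t_1)$. Although the loop closes, $Q(t_1)=\oint F_t/F_u\,dt$ need not vanish. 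For $F=u+AB(u)+CD(u)$ it is, to leading order, the signed area enclosed by $(A,C)$ times $BD'-DB'$. Choosing $D=BP$ with $P'=a=H/(N\alpha-H)$, and traversing $[0,\alpha]\times[0,1]$ $N$ times, makes it exactly $H=-K$.

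Because this handles arbitrary $c$, your normalization step is unnecessary. As written it is also incorrect. The shear must be $(t+c(u),u)$, not $(t-c(u),u)$. It then turns the offsets into $Rc'$ at the entrance and $(R+K)c'$ at the exit, which differ by $Kc'$. So the reduced problem is not of the original form, and the term $g(u)T$ cannot simply be absorbed when both germs are prescribed.
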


\begin{proof}
Put $H=-K$. Choose $B\in C_c^\infty(I)$ equal to one near $\operatorname{supp}H$, and $\alpha>0$ with $\alpha\|B'\|_\infty<1/4$. For an integer $N$ to be chosen, set
$$
 a=\frac{H}{N\alpha-H},\qquad
 P(u)=\int_{u_0}^u a(v)\,dv,\qquad D=BP,
$$
where $u_0$ is to the left of the support. Take $N$ sufficiently large that $N\alpha>2\|H\|_\infty$ and $\|D'\|_\infty<1/4$. The latter is possible: $a\to0$ uniformly, $P\to0$ uniformly on the fixed bounded interval, and $D'=B'P+Ba$. For $(A,C)\in[0,\alpha]\times[0,1]$,
$$
 F_{A,C}(u)=u+AB(u)+CD(u)
$$
is an increasing diffeomorphism, identity off a fixed compact set, because $\partial_uF_{A,C}>1/2$.

Starting at $(A,C)=(0,0)$, traverse this parameter rectangle counterclockwise $N$ times and end at $(0,0)$. Parametrize each edge smoothly with all derivatives zero at its endpoints; concatenate the edges and use constant endpoint collars. This is a smooth loop in any chosen positive time interval. Write $F(t,u)=F_{A(t),C(t)}(u)$. We claim
\begin{equation}
    \int \frac{F_t(t,u)}{F_u(t,u)}\,dt=H(u). \tag{3.1}\label{eq:loop}
\end{equation}
At a point with $H(u)\ne0$, we have $B=1$, $B'=0$, and $D'=a$. The two $C$-edges contribute opposite integrals, and the two $A$-edges contribute
$$
 \alpha-\frac{\alpha}{1+a}=\frac{\alpha a}{1+a}=\frac{H}{N}.
$$
At a point with $H(u)=0$, we have $a=0$ and $D'=B'P$ at that point. Writing $z=A+CP(u)$, the integrand is $B(u)z'/(1+B'(u)z)$, which is the derivative of a single-valued function of $z$ on the parameter range. Its closed-loop integral is zero.
This proves \eqref{eq:loop} everywhere.

Let $t_0$ be the entrance time, and put
$$
 \Phi(t,u)=(t,F(t,u)),\qquad
 Q(t,u)=\int_{t_0}^t\frac{F_t(s,u)}{F_u(s,u)}\,ds.
$$
Define the two fields to be the pushforwards by $\Phi$ of
$$
 \partial_t,\qquad
 (t+c(u))\partial_t+(R(u)-Q(t,u))\partial_u.
$$
The source bracket is
$$
 \partial_t-Q_t\partial_u
 =\partial_t-\frac{F_t}{F_u}\partial_u
 =\Phi^{-1}_*\partial_t.
$$
Thus the physical bracket is exactly $T$. On the entrance collar, $\Phi=\mathrm{id}$ and $Q=0$; on the exit collar, $\Phi=\mathrm{id}$ and $Q=H=-K$. This gives the required full germs.

For extra transverse variables $v$, replace $B,H,a,P,D$ by functions of $(u,v)$, integrate only in $u$, and use $\Phi(t,u,v)=(t,F(t,u,v),v)$. Choose $B$ compactly supported in a transverse box, equal to one near $\operatorname{supp}H$. The estimates are uniform on that compact box; for example,
$$
 \|D_u\|_\infty\le
 \bigl(|I|\|B_u\|_\infty+\|B\|_\infty\bigr)\|a\|_\infty
 \longrightarrow0.
$$ The triangular map is a diffeomorphism with a jointly smooth inverse because $F_u>1/2$. No smallness of derivatives in $v$ is needed. The same proof applies with $\partial_u$ throughout. An arbitrary incoming transverse field independent of $t$ is also allowed: its source coefficients have zero $t$-derivative, and it is recovered at both ends because $\Phi=\mathrm{id}$ there.
\end{proof}

\begin{lemma}[Scalar connector]\label{lem:scalar}
Let $c_{\mathrm{in}},c_{\mathrm{out}}$ be smooth functions on a transverse domain in $\mathbb R^m$, $m\ge1$, whose difference has compact support in its interior. In any prescribed positive time interval there is a smooth pair with bracket $T$, entrance germ $R_{c_{\mathrm{in}}}$ and exit germ $R_{c_{\mathrm{out}}}$. It retains that common germ off a compact subset of the transverse domain.
\end{lemma}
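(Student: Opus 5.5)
The plan is to deform $c_{\mathrm{in}}$ into $c_{\mathrm{out}}$ by conjugating with triangular diffeomorphisms that are the identity on the entrance and exit collars, as in the proof of Lemma~\ref{lem:kick}, and to pay for the induced scalar change with transverse fields that are inserted and later removed by transverse shifts. The identity I would use is the following. For $\psi_h(t,u)=(t+h(u),u)$ and a $t$-independent transverse field $V$,
$$
 (\psi_h^{-1})_*\bigl((t+c)T+V\bigr)=\bigl(t+c+h-V(h)\bigr)T+V,
\qquad (\psi_h)_*T=T.
$$
So a $t$-translation by a transverse function changes $c$ by $h-V(h)$. When $V=0$ it changes $c$ by exactly $h$, which is what we want. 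The difficulty is that $\psi_h$ is not the identity at the ends, so it has to be switched on and off inside the time interval, and a naive cutoff $\lambda(t)h(u)$ destroys $(\psi)_*T=T$.

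First I reduce to the case where $g=c_{\mathrm{out}}-c_{\mathrm{in}}$ is supported in a small box, using a partition of unity and finitely many time subintervals. Each connector keeps its germ off a compact set, so the pieces concatenate. In such a box I would build the connector as in Lemma~\ref{lem:kick}: take a smooth loop in a parameter rectangle, traversed $N$ times with flat collars, and a family of diffeomorphisms $\Phi(t,\cdot)$ made of a $t$-translation by $A\,h(u)$ composed with the time-$C$ flow of a compactly supported transverse field $V=B\,\partial_u$. The pair is defined as the pushforward by $\Phi$ of $\partial_t$ and of $(t+c_{\mathrm{in}})\partial_t$, corrected by a $t$-dependent scalar and transverse term $Q$. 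This term is chosen, exactly as in the source-bracket computation there, so that the source bracket equals $\Phi^{-1}_*\partial_t$, and hence the physical bracket is exactly $T$. At the exit $\Phi=\mathrm{id}$, and $Q$ records the holonomy of the loop. Its $\partial_u$-component is handled as in \eqref{eq:loop}: the $C$-edges cancel, and any leftover transverse term is removed afterwards by one application of Lemma~\ref{lem:kick}, which allows arbitrary $t$-independent transverse fields. Its $\partial_t$-component is, by the identity above, a commutator-type term of order $AC\,V(h)$ per circuit. After $N$ circuits with $A,C$ of size $N^{-1/2}$, this approximates $V(h)=B\,h_u$.

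This gives the reduced goal: realize a prescribed compactly supported $g$ as the scalar holonomy. Because $V(h)=\partial_u h$ near $\operatorname{supp}g$, any $g$ of the form $\partial_u h$ with $h$ compactly supported is reachable. A general $g$ need not have zero integral along the $u$-lines. To handle this I would first use a separate translation piece with $V=0$. In a collar where $\lambda(t)$ is switched on, I would replace $T$ by a nonstraight but exact pair built from flow coordinates of $\partial_t+\lambda'(t)h\,\partial_t$. Alternatively, and more simply, I would use $m\ge1$ to spread the mass: write $g=\partial_u h_1+g_0$, where $g_0$ is supported near a chosen point and its $u$-integral is moved outside the box by a $\partial_u$-derivative of a function equal to that integral far away. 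Since only compact support in the interior is needed, I would choose $h$ as the $u$-primitive of $g$ minus a bump times its total $u$-integral, and then let the bump slide to the boundary of the domain through an infinite sequence of time subintervals. Smoothness at the end would have to be checked.

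The step I expect to be the main obstacle is exactly this exact holonomy computation. One must show that the $\partial_t$-component of the loop's holonomy equals $g$ exactly rather than approximately, with the nonlinear terms cancelling as they did for $\alpha a/(1+a)=H/N$ in Lemma~\ref{lem:kick}. I would first try choosing $h$ nonlinearly, as was done with $a=H/(N\alpha-H)$, so that each circuit contributes exactly $g/N$.
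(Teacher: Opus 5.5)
Your opening idea, paying for the scalar change with a transverse field that is inserted and later removed, is the right one, and your identity for $\psi_h$ is the key computation. The route you then take has a genuine gap, and not only at the step you flag.

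Every stage you describe pushes forward a source pair whose first field is $\partial_t$. The pushforward is by an orientation-preserving $\Phi$ that is the identity at both ends, with transverse part $F(t,\cdot)$ a flow. By Cramer's rule, the $\partial_t$-component of $\Phi^{-1}_*\partial_t$ is $x=\det(\partial_uF)/\det d\Phi>0$. The offset change of the stage is $\int(x-1)\,dt$. So no such stage can lower the offset by as much as the width of its time interval. Nor can any concatenation of such stages with transverse shifts, since those leave the offset unchanged. The lemma, however, demands arbitrary $g=c_{\mathrm{out}}-c_{\mathrm{in}}$ in arbitrarily thin bands.

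Your leading-order claim also fails. Once the translation amplitude depends on $t$, $\Phi$ is no longer triangular, and $x$ is nonlinear in the velocities; on an $A$-edge, $x=1/(1+A'h)$. So the holonomy is not a line integral over the parameter loop as in \eqref{eq:loop}. Nonnegative excesses such as $\int(A'h)^2/(1+A'h)\,dt$ do not cancel between edges. With $N$ circuits of amplitude $N^{-1/2}$ squeezed into a fixed interval, this term grows with $N$, and one even loses $1+A'h>0$.

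Finally, sliding a bump to the boundary through infinitely many time subintervals destroys the compact transverse support the lemma asserts. It also gives no smoothness at the accumulation time. The zero-integral obstruction behind this step is self-imposed: your scalar change is $V(h)=B\,\partial_uh$, and nothing forces $B=1$.

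The missing idea is to let the transverse shift carry the arbitrary function and never switch the shear off. Choose $h$ compactly supported with $\partial_uh=\varepsilon$ near $\operatorname{supp}g$ and $\|h\|_\infty$ small, and put $K=g/\varepsilon$. By your identity, in the coordinates $s=t-h(u)$ the pair $(T,(t+c)T)$ is the explicit pair $(\partial_s,(s+c+h)\partial_s)$. Perform the $+K\partial_u$ shift of Lemma~\ref{lem:kick} there during a short source-time stage. It leaves the source offset unchanged, so pushing back by the same fixed shear gives the exit pair $(T,(t+c+K\,\partial_uh)T+K\partial_u)=(T,(t+c+g)T+K\partial_u)$. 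An ordinary $-K\partial_u$ shift afterwards then restores the transverse part without touching the offset.

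Before and after the sheared stage, the pair is explicit in both coordinate systems, so the pieces glue along the graphs $t=s_0+h(u)$. The only cost is a time displacement of at most $\|h\|_\infty$, and a large $K$ only enlarges $N$ in Lemma~\ref{lem:kick}, not the width. This is how the positivity bound above is escaped: the composite map equals $\psi_h$, not the identity, at both ends of the stage. Your finite partition-of-unity reduction then finishes the proof, as in the paper.
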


\begin{proof}
First suppose the difference $\Delta=c_{\mathrm{out}}-c_{\mathrm{in}}$ is supported in one transverse coordinate box, with one coordinate $u$. Choose a compactly supported function $g$ on that box satisfying $\partial_ug=\varepsilon$ near $\operatorname{supp}\Delta$, and put $K=\Delta/\varepsilon$. Such $g$ can be obtained by multiplying $\varepsilon u$ by a cutoff equal to one near that support. Its uniform norm can be arbitrarily small by taking $\varepsilon>0$ small.

The shear
$$
 S_g(s,u,v)=(s+g(u,v),u,v)
$$
preserves $T$. Conjugate a $+K\partial_u$ shift from Lemma~\ref{lem:kick} by this shear. For an incoming pair with second field $(t+c)T+R\partial_u$, the source second field is
$$
 (s+g+c-R\partial_ug)\partial_s+R\partial_u.
$$
After the shift and pushforward it is
$$
 (t+c+K\partial_ug)T+(R+K)\partial_u.
$$
An ordinary $-K\partial_u$ shift following it therefore changes only the scalar offset, by $K\partial_ug=\Delta$.

Here is a precise placement in a band of width $\ell$. Take $\|g\|_\infty<\ell/20$, perform the sheared shift using source times $[\ell/5,2\ell/5]$, and perform the ordinary inverse shift in $[3\ell/5,4\ell/5]$. The sheared stage lies between times $3\ell/20$ and $9\ell/20$. Fill the intervening regions with the explicit incoming or outgoing pairs $(T,(t+c)T+R\partial_u)$; all have bracket $T$. The endpoint collars guarantee smooth matching. The possibly large $K$ only increases the finite integer $N$ in Lemma~\ref{lem:kick}; it does not increase the support or the time width.

In general, split $\Delta$ by a finite partition of unity into functions supported in coordinate boxes compactly inside the transverse domain. Apply the complete scalar construction for each summand in successive short intervals. Each stage returns the transverse part to zero. Choose their total widths below the prescribed width. This also shows that any smaller collar whose closure lies in the open agreement region can be left fixed.
\end{proof}

\section{A full-boundary graph connector}\label{sec:graph}

Fix $L>0$ and $m\geq 1$, and write
$$
    C=[0,L]\times\mathbb R^m,
  \qquad (s,u)=(s,u^1,\ldots,u^m),
  \qquad S=\partial_s.
$$
Smooth fields on a closed cylinder are understood to extend smoothly to an open neighborhood. A transverse field is a field tangent to the slices $\{s=\mathrm{constant}\}$. Thus every pair admits a unique
decomposition
$$
  Y=aS+A,\qquad Z=bS+B,
  \qquad ds(A)=ds(B)=0.
$$
For $v=(a,b):C\to\mathbb R^2$, let
$$
  v_s=(\partial_s a,\partial_s b),
  \qquad
  d_uv=
  \begin{pmatrix}
    \partial_{u^1}a & \cdots & \partial_{u^m}a\\
    \partial_{u^1}b & \cdots & \partial_{u^m}b
  \end{pmatrix}.
$$
All norms and transverse gradients are Euclidean. With the convention $[Y,Z](h)=Y(Zh)-Z(Yh)$, we have
\begin{equation}\label{eq:normal}\tag{4.1}
  ds([Y,Z])
  =Y(b)-Z(a)
  =\det(v,v_s)+A(b)-B(a).
\end{equation}

\begin{lemma}[Positive normal component]
\label{lem:positive}
Fix $\varepsilon\in\{1,-1\}$. Let $E\subset C$ be closed and
suppose that, for some $R_0>0$,
$$
  \partial C\subset E,
  \qquad
  [0,L]\times\{u:|u|\geq R_0\}\subset E.
$$
Let $N$ be a relatively open neighborhood of $E$ in $C$.
Suppose that smooth vector fields $Y_*,Z_*$ are given on $N$ and
satisfy
$$
  [Y_*,Z_*]=\varepsilon S
  \qquad\text{on }N.
$$
Then there exist smooth vector fields $Y,Z$ on $C$,
agreeing with $Y_*,Z_*$ on a neighborhood of $E$, such that
$$
  \varepsilon\,ds([Y,Z])\geq\frac12
  \qquad\text{on }C.
$$
\end{lemma}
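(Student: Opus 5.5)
The plan is to use formula \eqref{eq:normal}: $\varepsilon\,ds([Y,Z])=\varepsilon\det(v,v_s)+\varepsilon\bigl(A(b)-B(a)\bigr)$. In this expression the term $\det(v,v_s)$ is an angular velocity, exactly as in the circle construction of Section~\ref{sec:introduction}, where $[r\cos\theta\,\partial_t,r\sin\theta\,\partial_t]=r^2\theta'\partial_t$. I will build the extension so that, away from $E$, the normal coefficient vector $v=(a,b)$ winds rapidly in the $s$-direction with large modulus. That quadratic winding term should then dominate every other contribution. Near $E$ the prescribed pair already gives $\varepsilon\,ds([Y_*,Z_*])=1$.

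\emph{Step 1 (layers and a crude extension).} I choose closed neighborhoods $E\subset E_2\subset E_1\subset N$, each contained in the interior of the next. The set $C\setminus E_2$ is relatively compact in $(0,L)\times\{|u|<R_0\}$. Using a cutoff equal to one on $E_1$ and supported in $N$, I extend $Y_*,Z_*$ to smooth fields $Y_0=a_0S+A_0$ and $Z_0=b_0S+B_0$ on $C$. On $E_1$ these satisfy $\varepsilon\,ds([Y_0,Z_0])=1$. Nothing is controlled on the compact region $K=\overline{C\setminus E_1}$.

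\emph{Step 2 (winding correction).} I take a cutoff $\sigma$ with $\sigma=0$ near $E_2$ and $\sigma=1$ on $K$, a large constant $\Lambda$, and put
\[
 v=v_0+\rho\,(\cos\theta,\sin\theta),\qquad \rho=\lambda\sigma^2,\qquad \theta=\varepsilon\Lambda s .
\]
The transverse parts are kept as $A_0,B_0$. The new fields then agree with $Y_*,Z_*$ near $E$. Expanding \eqref{eq:normal} gives
\[
 \varepsilon\,ds([Y,Z])=\varepsilon\,ds([Y_0,Z_0])+\Lambda\rho^2+(\text{cross terms}).
\]
The cross terms are $\det(v_0,w_s)$, $\det(w,(v_0)_s)$, $A_0(w_b)$ and $B_0(w_a)$, where $w=\rho(\cos\theta,\sin\theta)$. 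Each is linear in $\rho$. Their size is of order $\Lambda\rho+|\nabla\rho|$ times $C^1$ bounds on $(v_0,A_0,B_0)$. Those bounds are fixed once Step~1 is done, and the constant they produce does not depend on $\lambda$ or $\Lambda$. On $K$ we have $\rho=\lambda$, so the positive term is $\Lambda\lambda^2$ and the cross terms are $O(\Lambda\lambda)$. Taking $\lambda$ large therefore gives $\varepsilon\,ds\ge 1/2$ on $K$.

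\emph{Main obstacle: the transition zone $E_1\setminus E_2$ where $0<\sigma<1$.} Here $\varepsilon\,ds([Y_0,Z_0])=1$, but near $\sigma=0$ the linear cross terms are not dominated by $\Lambda\rho^2$. I would try two remedies. The first is to make the winding locally orthogonal to the existing data: replace the additive correction by a rotation of $v_0$ itself, $v=R_{\phi}v_0$ with $\phi=\phi(s)$. Then $\det(v,v_s)=\det(v_0,(v_0)_s)+|v_0|^2\phi'$ exactly, with no linear cross term. One first adds a constant vector to arrange $|v_0|\ge1$ on the transition zone. Because $\phi$ is independent of $u$, the transverse terms only mix as $\cos\phi\,(A_0b_0-B_0a_0)$ plus a $\sin\phi$-term. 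To control this mixing, in that zone I would take $\phi'$ of the sign $\varepsilon$ and let $\phi$ stay small there, so that $\cos\phi\approx1$. The large rotation $\phi'$ would be used only inside $K$. The second remedy, if the first fails, is to average: with $\theta=\varepsilon\Lambda s$ the cross terms have the form $\rho\cdot(\text{bounded})\cdot\Lambda\cos(\theta+\text{phase})$. These are $s$-derivatives of $O(\rho)$ quantities plus $O(|\nabla\rho|+\rho)$ terms. I would then absorb the derivative part by a further small adjustment of $b$, chosen with $\partial_s$-primitive of size $O(\rho)$. Of the two, I expect the transition-zone estimate to be the only genuinely delicate point of the proof.
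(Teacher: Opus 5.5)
There is a genuine gap, and sharper estimates in the transition zone will not close it. Your construction never changes the transverse components: Step~2 keeps $A_0,B_0$, and both remedies alter only the normal coefficients $v=(a,b)$. With the transverse components frozen, the problem is topologically obstructed.

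Take $\varepsilon=1$ and data with no transverse part, $Y_*=\cos\psi\,S$ and $Z_*=\sin\psi\,S$. Set $\psi=s+\beta(u)$ near $s=L$ and $\psi=s$ on the rest of a thin neighborhood $N$ of $E$. Here $\beta\in C_c^\infty(\{|u|<R_0\})$, $\beta(0)<-L$, and the lateral part of $N$ misses $\operatorname{supp}\beta$. Then $[Y_*,Z_*]=\psi_sS=S$ on $N$. Your $Y,Z$ have $A=B=0$, so $ds([Y,Z])=\det(v,v_s)$ by \eqref{eq:normal}.

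The inequality $\det(v,v_s)\ge\frac12$ forces $v\ne0$ and makes $\arg v$ strictly increasing along every $s$-line. The total turning $\int_0^L\det(v,v_s)/|v|^2\,ds$ is continuous in $u$ and congruent to $L+\beta(u)$ modulo $2\pi$. It equals $L$ for $|u|\ge R_0$, where $v=v_*$ on the whole line. The difference is a continuous $2\pi\mathbb{Z}$-valued function on $\mathbb{R}^m$, so the turning equals $L+\beta(u)$ everywhere, which is negative at $u=0$. Hence no choice of $v$ whatsoever works.

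The same invariant explains why Step~2 breaks down. Without transverse help, the turning along each line is pinned by the boundary germs. Your construction forces roughly $\Lambda\cdot(\text{length})/2\pi$ extra turns on the lines through $K$, and a monotone angle would turn at least that much in total. So $\varepsilon\det(v,v_s)>0$ must fail somewhere.

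Remedy~1 fails even earlier. A rotation $R_{\phi(s)}$ independent of $u$ is not the identity on the strip $[0,L]\times\{|u|\ge R_0\}\subset E$. Any $\phi$ with $R_\phi=I$ near $E$ can be normalized to vanish at $s=0$ and $s=L$. Thus $\int_0^L\phi_s\,ds=0$, and the forward rotation must be undone somewhere with $\varepsilon\phi_s|v_0|^2<0$.

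The missing idea is to use the transverse components, which lets $v$ pass through zero and breaks this invariant. The paper sets $A=A_0+\varepsilon R\chi_0\nabla_ub$ and $B=B_0-\varepsilon R\chi_0\nabla_ua$ with a large constant $R$. By \eqref{eq:normal}, this adds the nonnegative term $R\chi_0\bigl(|d_ua|^2+|d_ub|^2\bigr)$ to $\varepsilon\,ds([Y,Z])$, which settles every point where $d_uv\ne0$.

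What remains is to make $F=\varepsilon\bigl(\det(v,v_s)+A_0(b)-B_0(a)\bigr)>\frac12$ on the zero set of $d_uv$. A small Sard perturbation $v+\rho Pu$ makes $d_uv$ transverse to $0$ on the bad region. There it has no zeros when $m\ge2$, because $m+1<2m$, and only isolated zeros when $m=1$. At each of the finitely many bad zeros, the paper uses a local perturbation $\eta(s)\chi(u)$ that preserves the zero set of $v_u$. It prescribes $v(p)=z$ and $v_s(p)=2\varepsilon Jz/|z|^2$, with $J$ the rotation by $\pi/2$, so that $F=2$ there. Your angular-velocity idea survives only in this pointwise form.
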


\begin{proof}
Put $\Omega=C\setminus E$, an open subset of
$(0,L)\times\mathbb R^m$. Every perturbation below will have compact
support in $\Omega$.

Choose a smooth function $\theta:C\to[0,1]$ such that
$\theta=1$ near $E$ and
$\operatorname{supp}_{C}\theta\subset N$.
Extend $\theta Y_*$ and $\theta Z_*$ by zero outside $N$, obtaining
a preliminary pair
$$
  Y_{\mathrm{pre}}=a^{\mathrm{pre}}S+A_0,
  \qquad
  Z_{\mathrm{pre}}=b^{\mathrm{pre}}S+B_0.
$$
Keep $A_0,B_0$ fixed until the last step. For a scalar pair $v=(a,b)$,
write
$$
  F(v)
  =\varepsilon\bigl(\det(v,v_s)+A_0(b)-B_0(a)\bigr),
  \qquad
  G(v)=|d_ua|^2+|d_ub|^2.
$$
Set
$$
  v^{\mathrm{pre}}=(a^{\mathrm{pre}},b^{\mathrm{pre}}),
  \qquad
  F_{\mathrm{pre}}=F(v^{\mathrm{pre}}).
$$
Since $F_{\mathrm{pre}}=1$ near $E$, the set
$$
  K_{\mathrm{pre}}=\{F_{\mathrm{pre}}\leq 3/4\}
$$
is a compact subset of $\Omega$. If it is empty, the preliminary
pair already suffices.

Our first objective is to arrange
\begin{equation}\label{eq:positivity-on-zero-set}\tag{4.2}
  F(v)>\frac12
  \qquad\text{on }\{d_uv=0\}.
\end{equation}
Once this holds, a transverse correction will establish the desired inequality everywhere.

Choose open sets
$$
  K_{\mathrm{pre}}\subset U\Subset V\Subset\Omega
$$
and $\rho\in C_c^\infty(V,[0,1])$ equal to $1$ near $\overline U$. For a constant matrix
$P\in\operatorname{Hom}(\mathbb R^m,\mathbb R^2)$, set
$$
  v_P=v^{\mathrm{pre}}+\rho Pu.
$$
On $U$, we have
$$
  d_uv_P=d_uv^{\mathrm{pre}}+P.
$$
Apply Sard's theorem to
$$
  d_uv^{\mathrm{pre}}:
  U\longrightarrow\operatorname{Hom}(\mathbb R^m,\mathbb R^2).
$$
We may choose $P$ arbitrarily small so that $-P$ is a regular value; equivalently, $d_uv_P$ is transverse to zero on $U$. The perturbation $\rho Pu$ tends to zero in $C^1$ on its fixed compact support as $P\to0$. We may therefore also require
$$
  \|F(v_P)-F_{\mathrm{pre}}\|_\infty<\frac18.
$$
Fix such a $P$, and write
$$
  v^{(1)}=v_P,
  \qquad
  F_1=F(v^{(1)}).
$$
Then $F_1>5/8$ outside $U$, so
$$
  K_1=\{F_1\leq1/2\}
$$
is a compact subset of $U$.

If $m\geq2$, the inequality $m+1<2m$ and transversality imply that $d_uv^{(1)}$ has no zeros in $U$. Since $F_1>1/2$ outside $U$, \eqref{eq:positivity-on-zero-set} holds, and we set $v^{(2)}=v^{(1)}$.

Suppose that $m=1$. The zeros of $v^{(1)}_u$ in $U$ are isolated. Consequently the compact set
$$
  \mathcal B=K_1\cap\{v^{(1)}_u=0\}
$$
is finite. Choose pairwise disjoint closed rectangles compactly contained in $U$, one about each point of $\mathcal B$, such that each rectangle contains no other zero of $v^{(1)}_u$.

Fix $p=(s_0,u_0)\in\mathcal B$ and its rectangle $Q=I_s\times I_u$, and abbreviate $v=v^{(1)}$ for the local construction. Choose
$$
  \chi\in C_c^\infty(\operatorname{int}I_u,[0,1]),
  \qquad
  \chi=1\quad\text{near }u_0.
$$
Since $I_s\times\operatorname{supp}\chi'$ contains no zero of $v_u$,
$$
  \lambda
  =\min_{I_s\times\operatorname{supp}\chi'}|v_u|
  >0.
$$
Fix $\delta>0$ with
$$
  \delta\|\chi'\|_\infty<\lambda/2.
$$
Any perturbation
$$
  \widetilde v=v+\eta(s)\chi(u),
  \qquad
  \eta\in C_c^\infty(\operatorname{int}I_s,\mathbb R^2),
  \qquad
  \|\eta\|_\infty<\delta,
$$
preserves the entire zero set of $v_u$. Indeed,
$$
  \widetilde v_u=v_u+\eta\chi'
$$
is unchanged where $\chi'=0$ and outside $Q$, whereas on $I_s\times\operatorname{supp}\chi'$,
$$
  |\widetilde v_u|
  \geq\lambda-\|\eta\|_\infty\|\chi'\|_\infty
  >\lambda/2.
$$

Let $J(x_1,x_2)=(-x_2,x_1)$. Choose $z\neq0$ with $|z-v(p)|<\delta/2$, and put
$$
  \alpha=z-v(p),
  \qquad
  \beta=\frac{2\varepsilon}{|z|^2}Jz-v_s(p).
$$
Take $\psi\in C_c^\infty((-1,1),[0,1])$ equal to $1$ near zero, and define
$$
  \eta_h(s)
  =
  \psi\!\left(\frac{s-s_0}{h}\right)
  \bigl(\alpha+(s-s_0)\beta\bigr).
$$
For sufficiently small $h>0$, its support lies in $\operatorname{int}I_s$ and
$$
  \|\eta_h\|_\infty
  \leq|\alpha|+h|\beta|
  <\delta,
  \qquad
  \eta_h(s_0)=\alpha,
  \qquad
  \eta_h'(s_0)=\beta.
$$
For $\widetilde v=v+\eta_h\chi$, it follows that
$$
  \widetilde v(p)=z,
  \qquad
  \widetilde v_s(p)=\frac{2\varepsilon}{|z|^2}Jz,
  \qquad
  \varepsilon\det\bigl(\widetilde v(p),\widetilde v_s(p)\bigr)=2.
$$
Moreover, $\widetilde v_u(p)=0$, so the transverse contribution to $F(\widetilde v)(p)$ vanishes. Hence
$$
  F(\widetilde v)(p)=2.
$$

Perform this modification in each chosen rectangle and denote the result by $v^{(2)}$. The zero set of the transverse derivative is preserved, and the coefficients near every zero outside $\mathcal B$ are unchanged. Thus \eqref{eq:positivity-on-zero-set} holds for $v^{(2)}$ in the case $m=1$ as well. No bound on the $s$-derivatives of these local perturbations is required.

Finally, write $v^{(2)}=(a,b)$ and set
$$
  F_2=F(v^{(2)}),
  \qquad
  G_2=G(v^{(2)}),
  \qquad
  K_2=\{F_2\leq1/2\}.
$$
The set $K_2$ is compactly contained in $\Omega$ and disjoint from $\{G_2=0\}$. If $K_2$ is empty, take
$$
  Y=aS+A_0,\qquad Z=bS+B_0.
$$
Otherwise choose $\chi_0\in C_c^\infty(\Omega,[0,1])$ equal to $1$ near $K_2$, and a finite constant
$$
  R\geq\max_{K_2}\frac{1/2-F_2}{G_2}.
$$
Define
$$
  A=A_0+\varepsilon R\chi_0\nabla_u b,
  \qquad
  B=B_0-\varepsilon R\chi_0\nabla_u a,
  \qquad
  Y=aS+A,\quad Z=bS+B.
$$
By \eqref{eq:normal},
$$
  \varepsilon\,ds([Y,Z])
  =F_2+R\chi_0G_2.
$$
This is at least $1/2$ on $K_2$ by the choice of $R$. Outside $K_2$, we have $F_2>1/2$, and the added term is nonnegative. Every perturbation has compact support in $\Omega$, so the prescribed full germs along $E$ are retained.
\end{proof}

To apply the lemma with prescribed endpoint and vertical collars, choose $E$ to contain smaller closed collars and a closed transverse exterior, all lying inside the regions where the compatible germs are given. Agreement on a neighborhood of $E$ retains these full germs. The construction works for every fixed $L>0$; no estimates uniform in $L$ are asserted or needed.

\begin{proposition}[Graph connector with every lateral side attached]
\label{prop:graph}
In coordinates $(t,u)$, let $X=\partial_t$ and $R=(X,tX)$. Let $P$ be a closed disk in the transverse coordinate domain, and let $\Gamma=\{t=g(u):u\in P\}$ be a smooth graph. Suppose an exact pair $(Y_0,Z_0)$ is given near $\Gamma$, with $[Y_0,Z_0]=X$, and equals $R$ on a neighborhood of its boundary. For either sign $\varepsilon$ and every sufficiently small $H>0$, there is an exact pair on a neighborhood of the graph cylinder
$$
 \{(t,u):u\in P,\quad t=g(u)+\varepsilon r,\quad0\le r\le H\}
$$
which equals the old pair near $r=0$, equals $R$ near $r=H$, and equals $R$ on a full vertical collar of $\partial P$.
\end{proposition}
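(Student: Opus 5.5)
Following the three-step outline announced in the introduction, we straighten the graph cylinder, make the normal bracket component positive with Lemma~\ref{lem:positive}, reparametrize by the flow of the new bracket, and repair the outgoing pair with Lemma~\ref{lem:scalar}.

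\textbf{Step 1: straighten the cylinder.} Use the coordinates $(s,u)$ with $s=\varepsilon(t-g(u))$, so the graph cylinder becomes $C=[0,H]\times P$. In these coordinates $X=\partial_t=\varepsilon\,\partial_s$ exactly, so $[Y_0,Z_0]=\varepsilon S$ where $S=\partial_s$. Thicken $P$ slightly and extend the data to all of $\mathbb R^m$ by the reference pair $R$ off a neighborhood of $P$. This is consistent because $(Y_0,Z_0)=R$ near $\partial\Gamma$, and $R$ is exact for $X$ everywhere. Let $E$ be the union of three closed sets: a thin collar $\{s\le s_1\}$, on which we keep $(Y_0,Z_0)$; a thin collar $\{s\ge H-s_1\}$, on which we keep $R$; and the set $[0,H]\times\{u\notin P'\}$ for a slightly smaller disk $P'$, on which we keep $R$. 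For $H$ small, $(Y_0,Z_0)$ is defined on a neighborhood of $[0,s_1]\times P$, so the prescribed pair is defined and exact on a neighborhood $N$ of $E$.

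\textbf{Step 2: positivity and flow reparametrization.} Lemma~\ref{lem:positive} gives a pair $(Y_1,Z_1)$ on $C$ that agrees with the prescribed data near $E$ and satisfies $\varepsilon\,ds([Y_1,Z_1])\ge 1/2$. Set $W=[Y_1,Z_1]$; near $E$ we have $W=X$. Since $W$ is transverse to every slice $\{s=\text{const}\}$ and equals $\varepsilon\partial_s$ near the lateral region and the collars, its flow from the entrance slice sweeps out the cylinder. Each trajectory starting at $(s_1,u)$ reaches the exit slice in a finite time $\tau(u)$ with $\tau(u)\le 2H$, and $\tau(u)=H-2s_1$ off a compact subset of $\operatorname{int}P'$. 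Let $\Phi(r,u)$ be the flow of $W$ started at $(s_1,u)$. Then $\Phi$ is a diffeomorphism that carries $\partial_r$ to $W$ and is the identity translation near the lateral region and the entrance. Define a diffeomorphism $\Theta=\Phi\circ\Phi_0^{-1}$, where $\Phi_0$ is the analogous map for $X$. Modify it by choosing the parametrization so that $\Theta$ is the identity near the entrance and on the lateral collar. The pair $\Theta^{-1}_*(Y_1,Z_1)$ then has bracket exactly $X$ and agrees with the old pair near the entrance and with $R$ laterally. At the exit, however, the flow times are mismatched by $\tau(u)-(H-2s_1)$. In flow-box coordinates near the exit, the pushed-forward pair is $R$ composed with a $u$-dependent translation in $t$, that is, $(T,(t+c(u))T)$ with $c$ compactly supported in $\operatorname{int}P'$. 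A translation $t\mapsto t+c(u)$ preserves $T$ and turns $tT$ into $(t+c)T$, possibly with a transverse term $V$ independent of $t$ coming from $u$-dependence. The main obstacle is to check that this exit germ really has the form $R_c$, or $(T,(t+c)T+V)$, and not something less structured. I would verify it by noting that near the exit $(Y_1,Z_1)=R$ and $W=X$, so the only distortion is the time shift of the straightening.

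\textbf{Step 3: scalar correction.} Reserve a short extra band near the exit. If a transverse part $V$ appears, first cancel it with Lemma~\ref{lem:kick}, possibly after decomposing $V$ by a partition of unity into pieces of the form $K(u)\partial_{u^i}$. Then apply Lemma~\ref{lem:scalar} with $c_{\mathrm{in}}=c$ and $c_{\mathrm{out}}=0$. Their difference has compact support in $\operatorname{int}P'$, so the result returns the pair exactly to $R$ at $s=H$ while keeping $R$ on the lateral collar. Both lemmas work in any prescribed positive time interval. Smallness of $H$ is needed only so that the neighborhood of $\Gamma$ on which $(Y_0,Z_0)$ is defined contains the entrance collar.
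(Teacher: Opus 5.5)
Your strategy is the paper's: apply Lemma~\ref{lem:positive}, straighten by the flow of the new bracket, then apply Lemma~\ref{lem:scalar}. However, there is a genuine gap: the height bookkeeping fails as you set it up.

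You run Lemma~\ref{lem:positive} on the whole cylinder $[0,H]$, keeping $R$ on $\{s\ge H-s_1\}$. The pulled-back pair does not live on that cylinder. It lives between the entrance slice and the \emph{variable} exit graph $s=s_1+\tau(u)$. This graph is forced: requiring $\Theta^*W=X$ (with the correct sign) and $\Theta=\mathrm{id}$ at the entrance determines $\Theta$, and any reparametrization that flattens the exit destroys exactness. Lemma~\ref{lem:positive} only gives $\varepsilon\,ds(W)\ge\frac12$, so $\tau(u)$ may be as large as $2(H-2s_1)$. The exit graph can therefore rise above $s=H$. At such $u$ the pair is not $R$ near $r=H$, and there is no room for the band in which you want to apply Lemma~\ref{lem:scalar}. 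Your closing claim that smallness of $H$ matters only for the entrance collar is therefore wrong.

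The missing step is to do the positivity and straightening in an auxiliary cylinder of height $L$ with $2L+\ell<H$. The straightened pair then occupies $\{0\le r\le T(u)\}$ with $T\le 2L$. The scalar band of width $\ell$ sits above the exit graph, and $R$ fills the rest up to $r=H$.

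Lemma~\ref{lem:scalar} also works in a product band, so above the curved exit graph you need adapted coordinates. The paper uses $K(\rho,u)=(b(u)+\varepsilon\rho,u)$ with $b=g+\varepsilon T$. There the normalized offsets are $c_{\mathrm{in}}=\varepsilon(b+c)$ and $c_{\mathrm{out}}=\varepsilon b$, not $c$ and $0$. Only their difference matters, and it is compactly supported in $\operatorname{int}P$.

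Smaller points follow.
\begin{itemize}
\item For $\varepsilon=-1$ you must flow $\varepsilon W$ and $\varepsilon X=\partial_s$. The fields $W$ and $X$ themselves point out of the cylinder at the entrance.
\item Your ``main obstacle'' is settled by a direct computation, not a heuristic. Near the exit time the flow map $(r,u)\mapsto\operatorname{Fl}^r(0,u)$ equals $(L+r-T(u),H_0(u))$, where $H_0$ is the transverse exit map. It therefore pulls $\partial_s$ back to $\partial_r$.
\item Both reference fields are multiples of $\partial_s$, so no transverse term $V$ ever appears. The exit germ is exactly $(X,(t+c)X)$ with $c(u)=g(H_0(u))-g(u)+\varepsilon(L-T(u))$.
\item The distortion is thus not only a time shift, as you suggest. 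Trajectories also drift transversally, which contributes $g(H_0(u))-g(u)$. Your preliminary use of Lemma~\ref{lem:kick} is unnecessary.
\end{itemize}
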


\begin{proof}
Fix $\varepsilon\in\{1,-1\}$ and a sufficiently small $H>0$. Choose $L,\ell>0$ such that
$$
    2L+\ell<H.
$$
We shall first construct an exact pair up to a variable exit graph, correct its outgoing germ in a band of width $\ell$, and then extend by the reference pair.

Extend $g$ smoothly to $\mathbb R^m$, making it constant outside a compact set. Since the old pair agrees with $R$ near the boundary of the entrance graph, its entrance germ extends to all transverse variables by $R$. This extension is reference both outside $P$ and on a neighborhood of $\partial P$.

Consider the auxiliary cylinder $C_L=[0,L]\times\mathbb R^m$ with $(s,v)\in C_L$ and $S=\partial_s$ Under the coordinate map
$$
    J(s,v)=(g(v)+\varepsilon s,v),
$$
the target field and reference pair become
$$
    J^*X=\varepsilon S,
    \qquad
    J^*R=
    \bigl(\varepsilon S,\,[g(v)+\varepsilon s]\varepsilon S\bigr).
$$
Prescribe the pulled-back old germ near $s=0$, and prescribe $J^*R$ near $s=L$, throughout a full vertical collar of $\partial P$, and for all $v\notin P$. These prescriptions agree on their overlaps. After choosing disjoint endpoint collars, Lemma~\ref{lem:positive} gives a smooth pair $(\widehat Y,\widehat Z)$ retaining these germs
and satisfying
$$
    L_0=[\widehat Y,\widehat Z],
    \qquad
    ds(\varepsilon L_0)\ge\frac12.
$$
Set $V=\varepsilon L_0$. In every prescribed region, $V=S$.

We next use the flow of $V$ to straighten the bracket. Its coefficients are bounded on $C_L$: outside $P$ the field is $S$, while $[0,L]\times P$ is compact. The continuation theorem for smooth ODEs therefore rules out finite-time escape while a trajectory remains in the cylinder. Moreover, along every trajectory,
$$
    \frac{ds}{dr}=ds(V)\ge\frac12.
$$
Thus the trajectory starting at $(0,u)$ reaches $s=L$ exactly once, at a time $T(u)$ satisfying
$$
    0<T(u)\le 2L.
$$
The hit is transverse, so the implicit function theorem shows that $T$ is smooth.

Define
$$
    D_T=
    \{(r,u):u\in\mathbb R^m,\ 0\le r\le T(u)\},
    \qquad
    \Theta(r,u)=\operatorname{Fl}_V^{\,r}(0,u).
$$
This is a diffeomorphism from $D_T$ onto $C_L$. Indeed, the same continuation and monotonicity arguments show that every point of $C_L$ reaches $s=0$ uniquely under backward flow, in time at most $2L$. If $\tau(s,v)$ denotes this backward travel time and $\pi$
is the transverse projection, then
$$
    \Theta^{-1}(s,v)
    =
    \left(
        \tau(s,v),
        \pi\operatorname{Fl}_V^{-\tau(s,v)}(s,v)
    \right).
$$
The implicit function theorem at the bottom hit makes $\tau$, and hence this inverse, smooth. The endpoint germs allow these statements to be understood on neighborhoods of the closed cylinders.

By construction,
$$
    \Theta_*\partial_r=V=\varepsilon L_0,
    \qquad
    \Theta^*L_0=\varepsilon\partial_r.
$$
Now introduce the physical source coordinates through
$$
    F(r,u)=(g(u)+\varepsilon r,u).
$$
Since $F_*(\varepsilon\partial_r)=X$, the pair
$$
    (Y_1,Z_1)=F_*\Theta^*(\widehat Y,\widehat Z)
$$
satisfies
$$
    [Y_1,Z_1]
    =
    F_*\Theta^*L_0
    =
    X.
$$

We record the relative properties of this construction. Near the entrance, $V=S$, so $\Theta(r,u)=(r,u)$ for sufficiently small $r$. Consequently $(Y_1,Z_1)$ retains the old entrance germ. On the prescribed vertical collar, every integral curve of $V$ is vertical. By uniqueness, any trajectory meeting this collar must coincide with the vertical trajectory through that point; in particular, trajectories cannot cross $\partial P$. It follows that $\Theta$ restricts to a diffeomorphism
$$
    \{(r,u):u\in P,\ 0\le r\le T(u)\}
    \longrightarrow
    [0,L]\times P.
$$
On the collar, and also outside $P$, we have
$$
    \Theta(r,u)=(r,u),
    \qquad
    T(u)=L.
$$
Thus the physical pair is reference throughout the retained vertical collar.

It remains to identify and correct the exit germ. Define the transverse exit map $H_0$ by
$$
    \Theta(T(u),u)=(L,H_0(u)).
$$
Since $V=S$ near the auxiliary top, the flow has the explicit form
$$
    \Theta(r,u)=\bigl(L+r-T(u),H_0(u)\bigr)
$$
near $r=T(u)$. In particular, $\Theta^*S=\partial_r$ there.
Pulling back the auxiliary reference pair therefore gives
$$
    \Theta^*(\widehat Y,\widehat Z)
    =
    \left(
        \varepsilon\partial_r,\,
        \bigl[g(H_0(u))+\varepsilon(L+r-T(u))\bigr]
        \varepsilon\partial_r
    \right).
$$
Because $t=g(u)+\varepsilon r$, its physical expression is
$$
    (Y_1,Z_1)=\bigl(X,(t+c(u))X\bigr),
$$
where
\begin{equation}\label{eq:defect}\tag{4.3}
    c(u)=g(H_0(u))-g(u)+\varepsilon(L-T(u)).
\end{equation}
The function $c$ is smooth. On the retained collar and outside $P$, we have $H_0(u)=u$ and $T(u)=L$; hence $c=0$ there. In particular, $\operatorname{supp}c$ is a compact subset of $\operatorname{int}P$.

To apply Lemma~\ref{lem:scalar} above the variable exit graph, put
$$
    b(u)=g(u)+\varepsilon T(u),
    \qquad
    \rho=r-T(u),
$$
and use the coordinate map
$$
    K(\rho,u)=(b(u)+\varepsilon\rho,u).
$$
The graph $\rho=0$ is precisely the physical exit graph. In these coordinates,
$$
    K^*X=\varepsilon\partial_\rho,
$$
and the incoming pair is
$$
    K^*(Y_1,Z_1)
    =
    \left(
        \varepsilon\partial_\rho,\,
        [\rho+\varepsilon(b(u)+c(u))]\partial_\rho
    \right).
$$
Multiplying only the first field by $\varepsilon$ gives the normalized
pair
$$
    \left(
        \partial_\rho,\,
        [\rho+\varepsilon(b(u)+c(u))]\partial_\rho
    \right),
$$
whose bracket is $\partial_\rho$.

Apply Lemma~\ref{lem:scalar} in the band $0\le\rho\le\ell$, with offsets
$$
    c_{\mathrm{in}}=\varepsilon(b+c),
    \qquad
    c_{\mathrm{out}}=\varepsilon b.
$$
Their difference is compactly supported in $\operatorname{int}P$. We may therefore take the modification to have compact transverse support in $\operatorname{int}P$, retaining a smaller full collar of $\partial P$. Let $(A,B)$ be the resulting normalized pair. Thus
$$
    [A,B]=\partial_\rho,
$$
and its entrance and exit germs are the prescribed scalar pairs.

Undo the normalization and the coordinate change by setting
$$
    (Y_2,Z_2)=K_*(\varepsilon A,B).
$$
Then
$$
    [Y_2,Z_2]
    =
    K_*(\varepsilon\partial_\rho)
    =
    X.
$$
Its entrance germ agrees with $(Y_1,Z_1)$. At its exit,
$$
    (Y_2,Z_2)
    =
    K_*\left(
        \varepsilon\partial_\rho,\,
        [\rho+\varepsilon b(u)]\partial_\rho
    \right)
    =
    (X,tX)=R.
$$
On the retained vertical collar, the correction is also reference throughout.

The corrected upper graph is $r=T(u)+\ell$. Since
$$
    T(u)+\ell\le 2L+\ell<H,
$$
extend the pair by $R$ from this graph to $r=H$. Each gluing takes place where the adjoining pairs agree on an open
neighborhood, so the resulting pair is smooth and has bracket $X$. It retains the old germ near $r=0$, is reference near $r=H$, and is reference on a full vertical collar of $\partial P$. Finally, compactness of $P$ allows the endpoint neighborhoods to be chosen uniformly, giving the full collars asserted in the proposition.
\end{proof}

\section{Preparing a canonical equatorial belt}\label{sec:equator}

\begin{lemma}[Relative Morse preparation]\label{lem:morse} 
Suppose $M$ is closed and connected, $n\ge2$, and a coordinate neighborhood contains $\overline B_{2R}$ in $(t,u)$ coordinates. Let
$$
 A=\{t=0,\ |u|=R\}\cong S^{n-2}.
$$
There is a global Morse function $f$ equal to $t$ near $A$ such that all its critical points lie in $B_{R/4}$.
\end{lemma}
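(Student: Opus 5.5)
The plan is to first build a Morse function equal to $t$ on a large ball, and then push its critical points into $B_{R/4}$ by a diffeomorphism that is the identity near $A$. The key facts are that a codimension-two sphere does not disconnect $M$, and that diffeomorphisms of a connected manifold of dimension at least two act transitively on finite configurations of distinct points.

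\emph{Step 1 (interpolation).} Choose any Morse function $h$ on $M$, and a cutoff $\beta\in C_c^\infty(B_{2R},[0,1])$ with $\beta=1$ on $\overline B_{3R/2}$. Put $f_1=\beta t+(1-\beta)h$. On $B_{3R/2}$ we have $f_1=t$, so $df_1\neq0$ there and $f_1=t$ near $A$.

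\emph{Step 2 (relative Morse perturbation).} Apply the relative density of Morse functions \cite{Hirsch1976} to perturb $f_1$ only on the open set $M\setminus\overline B_{5R/4}$. Since $f_1$ has no critical points on the compact shell $\overline B_{3R/2}\setminus B_{5R/4}$, a $C^1$-small perturbation creates none there. Concretely, perturb by $\sum_i a_i\sigma_i x^i$ in finitely many charts covering $M\setminus B_{5R/4}$, with cutoffs $\sigma_i$ vanishing on $B_{5R/4}$, and apply Sard to obtain a nondegenerate choice of the $a_i$. The result $f_2$ is Morse, equals $t$ on $B_{5R/4}$, and has finitely many critical points $p_1,\dots,p_k$, all outside $\overline B_{5R/4}$.

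\emph{Step 3 (moving the critical points).} Let $N$ be a closed tubular neighborhood of $A$ inside $B_{5R/4}\setminus\overline B_{R/2}$. Since $A$ has codimension two in $M$, the open set $W=M\setminus N$ is connected; for $n=2$, $A$ is two points and $N$ is two small disks. Choose distinct points $q_1,\dots,q_k\in B_{R/4}$. By the standard homogeneity lemma, there is a compactly supported diffeomorphism $\psi$ of $W$, isotopic to the identity, with $\psi(p_i)=q_i$ for all $i$. To build it, join each $p_i$ to $q_i$ by an embedded path in $W$. Because $n\ge2$, these paths can be chosen pairwise disjoint: in dimension $2$, avoid the finitely many other points; in dimension $\geq3$, use general position. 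Then flow along cutoff vector fields supported in thin disjoint tubes around the paths. Extend $\psi$ by the identity on $N$.

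\emph{Step 4 (conclusion).} Set $f=f_2\circ\psi^{-1}$. This is Morse, because it is a composition with a diffeomorphism. Its critical points are exactly $\psi(p_i)=q_i\in B_{R/4}$. Since $\psi=\mathrm{id}$ near $A$, we have $f=f_2=t$ near $A$.

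The main obstacle I expect is Step 3. It requires the connectedness of $W$ and the existence of disjoint paths in $W$, and these are exactly where the hypotheses $n\geq2$ and $M$ connected enter. The rest (Steps 1, 2 and 4) is routine.
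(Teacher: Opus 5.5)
Your proposal is correct and follows essentially the paper's route. You keep $t$ on a ball via a cutoff, make the function Morse by a Sard-type perturbation $\sum_i a_i\sigma_i x^i$ supported away from that ball, and then push the finitely many critical points into $B_{R/4}$ by compactly supported diffeomorphisms of the connected complement of a closed tubular neighborhood of the codimension-two sphere $A$. The only real difference is in how the points are moved. The paper moves them one at a time, so each path only needs to avoid finitely many marked points, which is exactly what your dimension-two remark justifies. Your simultaneous version with pairwise disjoint paths also needs, in dimension two, each new path to avoid the arcs already chosen. That is possible because finitely many disjoint arcs do not disconnect a connected surface, but your sketch does not say so.
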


\begin{proof}
Extend the local function $t$ to a global smooth function using a cutoff, retaining it on a neighborhood of $\overline B_{2R}$. It has nonzero differential there. Make it Morse away from a smaller protected neighborhood as follows. Cover the compact region where critical points could occur by finitely many coordinate balls, with plateaus $\rho_j$ equal to one on smaller covering balls and supported away from the protected neighborhood. Add perturbations $\sum_j\rho_j a_j\cdot x_j$. At every possible critical point the parameter derivatives of the differential span the cotangent fiber. The universal critical locus is therefore smooth. Sard's theorem applied to its projection onto parameter space yields arbitrarily small parameters for which the differential is transverse to zero, equivalently all critical Hessians are nonsingular. Small parameters preserve the nonzero differential on the protected seam. Compactness gives finitely many critical points. Denote this Morse function by $f_0$.

Choose a small closed tubular neighborhood $\overline N$ of $A$, disjoint from $\overline B_{R/4}$ and contained where $f_0=t$. The tubular neighborhood can be taken to be a closed radius-$\rho$ normal disk bundle inside a radius-$2\rho$ tubular chart. Its complement is path connected. Here are details of this codimension-two fact. A path between two points outside $A$ can be perturbed off $A$: in finitely many product charts the normal space is two-dimensional, and a small generic perturbation of the one-dimensional path avoids its origin. This is also an immediate finite-parameter application of Sard with dimension count $1<2$. Thus $M\setminus A$ is path connected. In normal coordinates of radius up to $2\rho$, the continuous radial replacement
$$
 s\longmapsto\rho+s/2\quad(0<s\le2\rho),
 \qquad s\longmapsto s\quad(s\ge2\rho)
$$
pushes such paths off the closed radius-$\rho$ tube. Endpoints already outside that tube can be joined to their images by radial paths outside it. This proves path connectedness of $M\setminus\overline N$. The same path argument permits avoidance of finitely many additional points when $n\ge2$.

Move the critical points of $f_0$ one at a time to distinct points of $B_{R/4}$ by diffeomorphisms supported in $M\setminus\overline N$, avoiding the other marked points. Explicitly, join the current point to its destination by a path avoiding those points, subdivide the path into coordinate balls, and in each ball use the time-one flow of a constant coordinate vector multiplied by a cutoff equal to one along the short segment. The finitely many flows perform the move and fix a neighborhood of $\overline N$. Their composition $\Phi$ maps the original critical set into $B_{R/4}$. Then $f=f_0\circ\Phi^{-1}$ has precisely the required critical set and still equals $t$ near $A$.
\end{proof}

\begin{proposition}[Exterior solution canonical near the equator]
\label{prop:exterior}
Let $X=\partial_t$ on a coordinate neighborhood containing $\overline B_{2R}\subset M$, where $M$ is connected, closed, and $n\ge2$. There are global vector fields $Y_0,Z_0$ satisfying
$$
 [Y_0,Z_0]=X\quad\text{near }M\setminus\operatorname{int}B_{R/2},
$$
and having the full germ $(X,tX)$ near the equator $A=\{t=0,|u|=R\}$.
\end{proposition}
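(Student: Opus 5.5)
The plan is to take the first field to be a gradient-type field $S$ normalized by $S(f)=1$, where $f$ comes from Lemma~\ref{lem:morse}. The second field will be the reference field $tX$ near $A$ plus a correction from Lemma~\ref{lem:relative}. Throughout, $X$ denotes the given global field, which equals $\partial_t$ in the chart.

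\textbf{Step 1 (the function and the transverse field).} Lemma~\ref{lem:morse} gives a global Morse function $f$ with $f=t$ near $A$ and all critical points in $B_{R/4}$. Choose a Riemannian metric that is Euclidean in the $(t,u)$ chart on a neighborhood $W$ of $A$ where $f=t$; there $\nabla f=\partial_t$. Put $O=M\setminus\overline B_{3R/8}$, which is open and free of critical points. Choose a cutoff $\chi$ equal to one on $O$ and vanishing near the critical points. Define
$$
S=\chi\,\frac{\nabla f}{|\nabla f|^2}.
$$
Then $S$ is a global, hence complete, vector field with $S(f)=1$ on $O$ and $S=\partial_t=X$ on $W$. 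Let $K=M\setminus\operatorname{int}B_{R/2}$, a compact subset of $O$. The set $A$ is compact and lies in $K\cap f^{-1}(0)$, since $|u|=R>R/2$ and $f=t=0$ on $A$.

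\textbf{Step 2 (the reference piece and the error term).} Choose a cutoff $\theta$ supported in $W$ and equal to one on a smaller neighborhood of $A$. Put $Z_1=\theta\,tX$, a global field. We want $Y_0=S$, and hence need $[S,Z_0]=X$, i.e.\ $[Z_0,S]=-X$, near $K$. Writing $Z_0=Z_1+C$, this becomes $[C,S]=E$ with
$$
E=-X-[Z_1,S].
$$
On $W$ we have $S=\partial_t$ and $X=\partial_t$, so $[Z_1,S]=-\partial_t(\theta t)\,\partial_t$. Where $\theta=1$ this equals $-\partial_t$, and hence $E=-\partial_t+\partial_t=0$. Thus $E$ vanishes on a neighborhood of $A$.

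\textbf{Step 3 (relative transport).} Lemma~\ref{lem:relative}, applied with $S,f,O,K,A$ and this $E$, gives a global $C$ with $[C,S]=E$ near $K$ and $C=0$ near $A$. Set $Y_0=S$ and $Z_0=Z_1+C$. Then near $K\supset M\setminus\operatorname{int}B_{R/2}$ we get $[Y_0,Z_0]=-[Z_1,S]-[C,S]=X$. Near $A$ we have $Y_0=\partial_t=X$ and $Z_0=tX$, which is the required full germ.

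The only delicate point is Step 1. We must ensure simultaneously that $S(f)=1$ on an open set containing all of $K$, which Lemma~\ref{lem:morse} provides by confining the critical points to $B_{R/4}$, and that $S$ coincides with $X$ near $A$, which the locally Euclidean metric provides. The rest is the direct calculation of Step 2 together with an application of Lemma~\ref{lem:relative}.
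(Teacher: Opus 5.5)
Your proposal is correct and follows the paper's proof essentially verbatim. It uses the same normalized gradient field $S$ with $S(f)=1$ near $K=M\setminus\operatorname{int}B_{R/2}$ and $S=X$ near $A$, the same reference piece $tX$ near $A$ that makes the error field vanish there, and the same appeal to Lemma~\ref{lem:relative}. The differences are cosmetic: you build the global extension of $S$ with an explicit cutoff $\chi$ (implicitly taking $W\subset O$ so that $\chi=1$ on $W$), and you solve directly for $Z_0=\theta tX+C$ rather than for $D=-Z_0$ with $\overline D=-tX$ near $A$.
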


\begin{proof}
Take $f$ from Lemma~\ref{lem:morse}. Choose a Riemannian metric Euclidean near $A$. Since all critical points lie in $B_{R/4}$, the field
$$
 S=\frac{\operatorname{grad}f}{|\operatorname{grad}f|^2}
$$
is defined near $M\setminus B_{R/3}$. Extend it to a global smooth vector field, retaining that formula near $K=M\setminus\operatorname{int}B_{R/2}$. Thus $S(f)=1$ on an open neighborhood $O$ of $K$, and $S=X$ near $A$. Completeness follows from compactness.

Choose a global vector field $\overline D$ equal to $-tX$ near $A$. There
$[\overline D,S]=[-tX,X]=X$, so
$$
 E=X-[\overline D,S]
$$
vanishes near $A$. Also $A\subset K\cap f^{-1}(0)$. Lemma~\ref{lem:relative} gives $C$ with $[C,S]=E$ near $K$ and $C=0$ near $A$. Hence
$$
 D=\overline D+C,\qquad [D,S]=X\quad\text{near }K,
 \qquad D=-tX\quad\text{near }A.
$$
Set $Y_0=S$ and $Z_0=-D$. Their bracket is $X$, and near $A$ they are exactly $(X,tX)$, as claimed.
\end{proof}

\section{Completion of the proof by two caps}\label{sec:completion}

\begin{proof}[Proof of Theorem~\ref{thm:main} in dimension $n\ge2$]
If $X$ is identically zero on the connected component, take $Y=Z=0$. Otherwise choose a point where $X\ne0$. The flow-box theorem supplies coordinates $(t,u)$ with $X=\partial_t$, and we may choose $R>0$ with $\overline B_{2R}$ in this chart. Proposition~\ref{prop:exterior} supplies an exact exterior pair $(Y_0,Z_0)$ and an open reference neighborhood $V$ of the equator $A$, where that pair is $(X,tX)$.

For sufficiently small $\tau>0$, put
$$
 a=\sqrt{R^2-\tau^2}.
$$
The compact equatorial wedge
$$
 W_\tau=\{(t,u):t^2+|u|^2\le R^2,\ |u|\ge a\}
$$
lies in $V$. To see this, these wedges converge to $A$ as $\tau\to0$: their points satisfy $|t|\le\tau$ and $a\le|u|\le R$. Choose $\tau$ strictly small enough that the wedge has an open neighborhood in $V$. The two complementary boundary caps are the smooth graphs
$$
 t=q(u),\qquad t=-q(u),\qquad
 q(u)=\sqrt{R^2-|u|^2},\qquad |u|\le a.
$$
They are genuine transverse graph disks, since $q\ge\tau>0$; no graph is taken through the equator itself. The old pair is given on a full neighborhood of both caps, and equals the reference on full collars of their entire rims.

Apply Proposition~\ref{prop:graph} to the north cap with inward sign $\varepsilon=-1$, and to the south cap with inward sign $\varepsilon=+1$, using one sufficiently small uniform height $0<H<\tau/2$. These give exact pairs in the two shells
\begin{align*}
    N&=\{|u|\le a,\ q(u)-H\le t\le q(u)\},\\
    S&=\{|u|\le a,\ -q(u)\le t\le-q(u)+H\}.
\end{align*}
Both shells lie in $\overline B_R$ and are disjoint: the first has $t\ge\tau-H>0$, while the second has $t\le-\tau+H<0$. Each pair equals the old pair on its outer full collar and equals the reference on its inner full collar and on a full collar of $|u|=a$.

Define the pair to be the reference $(X,tX)$ on the middle region
$$
 C=\{|u|\le a,\ -q(u)+H\le t\le q(u)-H\}
$$
and on $W_\tau$. Use the old pair on $M\setminus\operatorname{int}B_R$. These regions cover $M$. At the outer cap boundaries the pairs agree with the old full germs. At the inner cap boundaries they agree with the reference full germs. At all lateral boundaries and their corners, both pairs equal the reference on open neighborhoods. On the remaining spherical belt the old pair is already reference because
$W_\tau\subset V$.

Thus the specified pieces define globally smooth fields $Y,Z$. Their bracket equals $X$ on the exterior, on the two shells, and on the reference regions, where $[X,tX]=X$. Hence $[Y,Z]=X$ everywhere.
\end{proof}

\section{A cotangent-bundle consequence}\label{sec:cotangent}

The theorem has a direct interpretation on the cotangent bundle. For $V\in\Vect(M)$ define the smooth fiberwise linear function
$$
 \ell_V:T^*M\longrightarrow\mathbb R,\qquad
 \ell_V(\alpha_x)=\alpha_x(V_x).
$$
Every smooth fiberwise linear function has this form for a unique $V$: in local cotangent coordinates its coefficients are the smooth functions $V^j(q)=\partial_{p_j}F(q,0)$. In canonical cotangent coordinates $(q^i,p_i)$, fix the Poisson convention
$$
 \{F,G\}=\sum_i\left(
 \frac{\partial F}{\partial q^i}\frac{\partial G}{\partial p_i}
 -\frac{\partial F}{\partial p_i}\frac{\partial G}{\partial q^i}
 \right).
$$
This is the canonical cotangent Poisson structure with $\{q^i,p_j\}=\delta^i_j$; background on the canonical symplectic structure may be found in~\cite[Chapter~22]{Lee2013}. Since $\ell_V(q,p)=\sum_jp_jV^j(q)$, direct differentiation gives
$$
 \{\ell_U,\ell_V\}
 =\sum_{i,j}p_j\bigl(V^i\partial_iU^j-U^i\partial_iV^j\bigr)
 =-\ell_{[U,V]}.
$$
The sign is fixed by our explicit Poisson convention.

\begin{corollary}\label{cor:cotangent} 
Let $M$ be closed. Every smooth fiberwise linear function $F$ on $T^*M$ can be written as $F=\{G,H\}$, where $G$ and $H$ are smooth fiberwise
linear functions on $T^*M$.
\end{corollary}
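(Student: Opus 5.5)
The plan is to reduce the corollary directly to Theorem~\ref{thm:main} using the correspondence $V\mapsto\ell_V$ and the bracket identity $\{\ell_U,\ell_V\}=-\ell_{[U,V]}$ computed just above.

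First, given a smooth fiberwise linear function $F$ on $T^*M$, I will use the uniqueness statement recorded before the corollary to write $F=\ell_X$ for a unique $X\in\Vect(M)$. The smoothness of $X$ is immediate from the coordinate formula $X^j(q)=\partial_{p_j}F(q,0)$. Second, I will apply Theorem~\ref{thm:main} to $-X$ rather than to $X$, which yields $Y,Z\in\Vect(M)$ with $[Y,Z]=-X$. Third, I will set $G=\ell_Y$ and $H=\ell_Z$. Both are smooth fiberwise linear functions, since in cotangent coordinates they are $\sum_j p_jY^j(q)$ and $\sum_j p_jZ^j(q)$. The identity then gives
$$
 \{G,H\}=-\ell_{[Y,Z]}=-\ell_{-X}=\ell_X=F,
$$
where the last equality before $F$ uses that $V\mapsto\ell_V$ is linear.

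The only point that requires care is the global validity of the bracket identity. The identity was derived in canonical coordinates. The canonical Poisson bracket is coordinate independent, because it is defined by the canonical symplectic form and coordinate changes on $M$ induce symplectomorphisms of $T^*M$. The Lie bracket and the map $V\mapsto\ell_V$ are also intrinsic. So the pointwise identity on each chart of $T^*M$ holds globally. I will also double-check the sign once by hand. With $U=\partial_1$ and $V=q^1\partial_1$, we have $\{p_1,q^1p_1\}=-p_1$ and $[U,V]=\partial_1$, so $-\ell_{[U,V]}=-p_1$, which matches.

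I expect no genuine obstacle. All the substantive work is in Theorem~\ref{thm:main}, and the corollary only requires keeping the sign straight, which is handled by applying the theorem to $-X$. Since the theorem places no restriction on the components of $M$ or on their dimension, the corollary holds for every closed $M$ without further case analysis.
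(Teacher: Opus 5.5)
Your proof is correct and is essentially the paper's argument: both reduce to Theorem~\ref{thm:main} via the identity $\{\ell_U,\ell_V\}=-\ell_{[U,V]}$ and the linearity of $V\mapsto\ell_V$. The only difference is in handling the sign: the paper applies the theorem to $X$ and uses antisymmetry, taking $G=\ell_Z$, $H=\ell_Y$ so that $\{\ell_Z,\ell_Y\}=\ell_{[Y,Z]}=F$, whereas you apply it to $-X$ and keep the order of the factors.
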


\begin{proof}
Write $F=\ell_X$ and choose $Y,Z$ with $[Y,Z]=X$ by Theorem~\ref{thm:main}. Then $F=\ell_{[Y,Z]}=\{\ell_Z,\ell_Y\}$, so take $G=\ell_Z$ and $H=\ell_Y$.
\end{proof}

\begin{remark}
The representation theorem is existential in the smooth category. Each transport sum, parameter loop construction, and set of local jet corrections is finite. The widths used in the graph connectors are fixed and positive before the final fields are constructed. Derivatives of the factors may grow as a connector is made thinner; no limiting construction or uniform derivative estimate is needed. In particular, the argument does not supply a continuous choice of bracket factors as $X$ varies.
\end{remark}

% -------------------------------------------------
% Bibliography
% -------------------------------------------------
\printbibliography

\end{document}